\documentclass[11pt]{article}
\usepackage{amssymb,amsmath,amsthm}
\usepackage[margin=1.1in]{geometry}
\usepackage[utf8]{inputenc}
\usepackage[parfill]{parskip}
\usepackage{enumitem}
\usepackage[backref]{hyperref}
\usepackage{keytheorems}
\usepackage[framemethod=TikZ]{mdframed}
\usepackage{titlesec}
\usepackage[dvipsnames]{xcolor}

\usetikzlibrary{calc}

\allowdisplaybreaks

\newcommand*{\eqcomment}[2][\qquad]{#1 &&\left(\text{#2}\right)}

\newkeytheorem{example}[parent=section]
\newkeytheorem{theorem}[sibling=example]
\newkeytheorem{proposition}[sibling=example]
\newkeytheorem{lemma}[sibling=example]

\theoremstyle{definition}
\newkeytheorem{definition}[sibling=example]
\newkeytheorem{remark}[sibling=example]

\numberwithin{equation}{section}

\newcommand*{\R}{\mathbb{R}}
\newcommand*{\Z}{\mathbb{Z}}
\newcommand*{\N}{\mathbb{N}}
\newcommand*{\eps}{\varepsilon}

\newcommand{\vecX}{\mathrm{X}}

\newcommand\restrict[1]{\raisebox{-.5ex}{$|$}_{#1}}

\newcommand{\kh}{Khovanski{\u\i}}
\newcommand{\khs}{Khovanski{\u\i}'s}
\newcommand{\bzt}{B\'ezout-type}
\newcommand{\bzs}{B\'ezout's}

\newmdenv[
  linewidth=1pt,
  roundcorner=6pt,
  backgroundcolor=cyan!10,
  linecolor=gray!60,
  innertopmargin=11pt,
  innerbottommargin=3pt,
  innerleftmargin=5pt,
  innerrightmargin=5pt,
  skipabove=10pt,
  nobreak=true,
]{oqbox}

\newtheorem{inneropenquestion}{Open Question}

\newenvironment{openquestion2}
  {\begin{oqbox}\begin{inneropenquestion}}
  {\end{inneropenquestion}\end{oqbox}}

\titleformat{\subsection}[runin]
  {\normalfont\large\bfseries} 
  {\thesubsection}             
  {1em}                        
  {}                           
  [.]                          

\title{Parameterwise Sharpness of \khs{} \bzt{} Bound for Pfaffian Functions}
\author{Terence Bickerton$^1$, Joseph Harrison$^2$, Olivia Hornakova$^3$, Dominic Le-Mar$^4$, \\
Abhiram Natarajan$^5$, and Nadia Potter$^6$}
\date{%
    $^1$terence.bickerton@st-annes.ox.ac.uk, University of Oxford, UK \\
    $^2$joseph.s.harrison@warwick.ac.uk, University of Warwick, UK \\
    $^3$olihornakova@gmail.com, University of Cambridge, UK \\
    $^4$dominic.le-mar@mansfield.ox.ac.uk, University of Oxford, UK \\
    $^5$abhiram.natarajan@warwick.ac.uk, University of Warwick, UK \\
    $^6$nadia.potter@kcl.ac.uk, Kings College London, UK \\
}

\begin{document}

\maketitle

\begin{abstract}
\khs{} theorem gives a \bzt{} upper bound for the number of isolated real solutions of a system of $n$ Pfaffian equations in $n$ variables in terms of three complexity parameters: the chain-degree $\alpha$, the degrees $\beta_i$ of the Pfaffian functions, and the order $s$ of the underlying Pfaffian chain. Despite its fundamental role in Pfaffian geometry and o-minimality, little is known about the sharpness of this bound.

We investigate the theorem from a parameter-by-parameter perspective. We show that its dependence on the chain-degree $\alpha$ is asymptotically sharp by constructing, for every $\alpha,s\in\mathbb N$, a Pfaffian function of format $(\alpha,1,s)$ with at least $\alpha^s$ nondegenerate real zeros. We also show that its dependence on the degrees $\beta_i$ is asymptotically sharp: for fixed $n$ and $s$, we construct Pfaffian systems having $\Omega_{n,s}(\beta^{n+s})$ regular common zeros, matching the order of growth predicted by \khs{} theorem as $\beta\to\infty$.
\end{abstract}

\tableofcontents

\section{Introduction}

\subsection{Pfaffian functions} The notion of Pfaffian functions was introduced by \kh{} \cite{khovanskii1980class, khovanskiiicm} in connection with work on \emph{fewnomials} \cite{khovanskiui1991fewnomials}, and the second part of \emph{Hilbert’s sixteenth problem} \cite{ilyashenko}. These functions satisfy triangular systems of first-order partial differential equations with polynomial coefficients. Pfaffian functions encompass a wide array of elementary and special functions, including exponentials, logarithms, restricted trigonometric functions, and iterated exponentials. See Section~\ref{sec:preliminaries} for the precise definitions and examples.

Pfaffian functions arise in many contexts and also play a central role in \emph{o-minimal geometry}, the study of \emph{o-minimal structures}. Roughly speaking, o-minimal structures are collections of sets that share the convenient topological properties enjoyed by sets defined by real polynomials; in this sense, o-minimal geometry may be viewed as a generalization of real algebraic geometry. Sets defined by finitely many Pfaffian equalities and inequalities form the basic objects of the so-called \emph{Pfaffian structure}, and a foundational theorem of Speissegger \cite{speissegger1999pfaffian} shows that the Pfaffian structure is o-minimal. Thus, Pfaffian functions provide one of the richest and most important sources of examples in o-minimal geometry; see van den Dries~\cite{van1998tame} for a general introduction to o-minimal structures.

\subsection{\khs{} theorem and its sharpness} One of the central results in algebraic geometry is \bzs{} theorem, which bounds the number of common zeros of a system of polynomial equations in terms of their degrees. In many ways, \bzs{} theorem foreshadows the later development of algebraic geometry through projective geometry, intersection theory, and schemes. A corresponding foundational theorem in the theory of Pfaffian functions is a result of \kh{} \cite{khovanskiui1991fewnomials}, which gives a \bzt{} upper bound on the number of isolated real solutions of a system of Pfaffian equations. The bound depends on several complexity parameters associated to the Pfaffian system.

To state this result, one associates to a Pfaffian function three complexity parameters:
\begin{itemize}
    \item the \emph{order} $s$, the length of the underlying Pfaffian chain;
    \item the \emph{chain-degree} $\alpha$, controlling the degrees of the polynomial differential equations defining the same underlying Pfaffian chain;
    \item the \emph{degree} $\beta$, measuring the polynomial complexity of the Pfaffian function.
\end{itemize}
In such a case, we will sometimes say that the Pfaffian function has \emph{format} $(\alpha, \beta, s)$. Once again, we direct the reader to Section \ref{sec:preliminaries} for precise definitions.

For a fixed Pfaffian chain $\vec{q}$ of order $s$ and chain-degree $\alpha$, \khs{} theorem provides an explicit upper bound on the number of isolated real solutions of a system of Pfaffian functions of degrees $\beta_1,\ldots,\beta_n$ defined with respect to $\vec{q}$ , in terms of $s$, $\alpha$, and the degrees $\beta_i$.

\begin{theorem}[note={\khs{} theorem \cite{khovanskiui1991fewnomials}}, store=khbezout]\label{thm:khovanskii}
Fix a Pfaffian chain $\vec{q}$ of chain-degree $\alpha$ and order $s$. Let $F=(f_1,\dots,f_n):\mathbb{R}^n\to\mathbb{R}^n$ be a Pfaffian system where each $f_i$ is a Pfaffian function defined with respect to $\vec{q}$ and has degree $\beta_i$. Then the number of isolated real solutions of $F(\mathbf{x})=0$ is bounded above by
\[
2^{\frac{s(s-1)}{2}}
\left(\prod_{i=1}^n \beta_i\right)
\left(
\sum_{i=1}^n \beta_i - n + \min(n,s)\alpha + 1
\right)^s.
\]
\end{theorem}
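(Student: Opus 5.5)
We plan to follow Khovanski\u{\i}'s original strategy: induct on the order $s$ of the chain, using a Rolle--Khovanski\u{\i} argument (``Khovanski\u{\i}--Rolle'' for separating solutions) to eliminate one Pfaffian function $q_s$ at a time, replacing it by a new free variable and trading the Pfaffian equation for a polynomial condition plus a count of points on a curve. The base case $s=0$ is B\'ezout's theorem for real polynomials: the number of isolated real zeros of polynomials of degrees $\beta_1,\dots,\beta_n$ is at most $\prod_i\beta_i$. This is the quantity the formula reduces to when $s=0$.

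For the inductive step, write $f_i(\mathbf x)=P_i(\mathbf x,q_1(\mathbf x),\dots,q_s(\mathbf x))$. Introduce a new variable $y$ for $q_s$ and consider the system $\tilde P_i(\mathbf x,y)=P_i(\mathbf x,q_1,\dots,q_{s-1},y)=0$ in $\R^{n+1}$. By a small generic perturbation (Sard's theorem) we may assume the isolated zeros in question are nondegenerate and that $\tilde P_1=\dots=\tilde P_n=0$ defines a smooth curve $\Gamma$ in $\R^{n+1}$. The solutions of $F=0$ are the intersections of $\Gamma$ with the graph $\{y=q_s(\mathbf x)\}$, which is a separating solution of the Pfaff equation $\omega=dy-\sum_j g_{sj}(\mathbf x,q_1,\dots,q_s)\,dx_j=0$. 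Khovanski\u{\i}'s version of Rolle's theorem for separating solutions then says that, on each component of $\Gamma$, between two consecutive intersections there is a point of tangency, where $\omega\restrict{\Gamma}=0$. The total count is therefore at most the number of noncompact components of $\Gamma$, plus the number of tangency points, which equals the number of solutions of $\tilde P_1=\dots=\tilde P_n=0$ together with a determinant condition $D=0$. Here $D$ is the $(n+1)\times(n+1)$ Jacobian-type determinant of $(d\tilde P_1,\dots,d\tilde P_n,\omega)$. Its degree in the chain variables is bounded by $\sum_i(\beta_i-1)+\alpha$ plus the contribution from differentiating $q_1,\dots,q_{s-1}$, which multiplies by chain-degree terms; bookkeeping gives degree at most $\sum_i\beta_i-n+\min(n,s)\alpha$.

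The tangency system is a Pfaffian system in $n+1$ variables over a chain of order $s-1$, namely $q_1,\dots,q_{s-1}$ with $y$ now polynomial. We apply induction to it. The component count is bounded by a similar compactification argument: we intersect with a large sphere, or count critical points of a generic linear function, giving a term of the same shape. Summing and simplifying with the factor $2$ per step (which accumulates to $2^{s(s-1)/2}$ because the number of equations grows by one each step) yields the stated bound, with an extra $+1$ per level absorbing the component count. That produces the factor $\bigl(\sum\beta_i-n+\min(n,s)\alpha+1\bigr)^s$.

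The main obstacle is the bookkeeping in the inductive step. We must check that the degrees of the new equations (in particular $D$ and the equations used to count components) stay within the claimed bound, so that the recursion closes with exactly the exponent $s$ and the prefactor $2^{s(s-1)/2}$. We must also handle the reduction to the nondegenerate, generic case: isolated but degenerate zeros are controlled by perturbing the $f_i$ by small constants, which does not decrease the number of isolated zeros locally.
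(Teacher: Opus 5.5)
The paper gives no proof of this statement; it is quoted from \kh{}. Your outline (induction on the order, replacing $q_s$ by a variable $y$, and applying the \kh{}--Rolle theorem to the curve $\Gamma$ and the separating solution $\{y=q_s(\mathbf x)\}$) is \khs{} own strategy. As written, however, the recursion does not close to the stated bound. You apply the theorem itself to the contact system, viewed as $n+1$ equations in $n+1$ variables over $(q_1,\dots,q_{s-1})$. The inductive bound then carries $\min(n+1,s-1)\alpha$, which exceeds $\min(n,s)\alpha$ whenever $s\ge n+2$. So with $M=\sum_i\beta_i-n+\min(n,s)\alpha+1$ and $\deg D$ as large as $M-1$, the next bracket is $2M-2+\alpha$ rather than at most $2M-2$. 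Concretely, take $n=1$, $s=3$, $\beta_1=1$. The contact system $(\tilde P_1,D)$ has degrees $(1,\alpha)$, and its inductive bound alone is $2\cdot\alpha\cdot(3\alpha)^2=18\alpha^3$, which exceeds the target $8(\alpha+1)^3$ once $\alpha\ge 4$.

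The missing idea is a strengthened induction hypothesis recording that the chain depends only on the original $n$ coordinates (equivalently, \khs{} formulation on the Pfaffian manifold $\{y_j=q_j(\mathbf x)\}\subset\R^{n+s}$). Each contact determinant is the coefficient of a top-degree form in which every $dy$-differential appears exactly once. Hence the degree-$\alpha$ $dx$-parts of the chain forms fill at most $\min(n,k)$ slots when the chain has order $k$. Then $\deg D\le M-1$ and the next bracket is at most $2M-2$. This near-doubling of the bracket, raised to the power $s-1$, is what produces the factor $2^{s-1}$ per step and hence $2^{s(s-1)/2}$; it does not come from the growth in the number of equations. Noncompact components should be counted with a large sphere, a degree-$2$ equation. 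This gives at most $2^{(s-1)(s-2)/2}\prod_i\beta_i\,(M+1)^{s-1}$, which fits inside $2^{s(s-1)/2}\prod_i\beta_i\bigl(M^s-(M-1)^s\bigr)$. Critical points of a linear function cannot replace this, since a straight-line component has none.

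Two genericity steps also need repair. First, the contact points need not be nondegenerate solutions of $\tilde P_1=\cdots=\tilde P_n=D=0$, so induction does not apply to them directly. The standard fix is to pass to a regular value so that all zeros of $F$ are nondegenerate. Then $D$ has opposite signs at consecutive points of $\Gamma\cap\{y=q_s(\mathbf x)\}$ along a component of $\Gamma$ (this is precisely the separating property), so one may replace $D$ by $D-\varepsilon$ for a small regular value $\varepsilon$ of $D|_{\Gamma}$. Second, and more seriously, your reduction from isolated to nondegenerate zeros is false. For $F=(x^2+y^2,\,x^2+y^2)$ the origin is an isolated zero. Yet $F=(\varepsilon_1,\varepsilon_2)$ has no solutions when $\varepsilon_1\neq\varepsilon_2$, and only a circle of non-isolated ones when $\varepsilon_1=\varepsilon_2>0$. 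So small constant perturbations can annihilate an isolated zero. The Rolle--\kh{} induction counts nondegenerate solutions; passing to isolated solutions, as in the statement, needs a separate argument that your proposal does not supply.
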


\khs{} theorem is a foundational quantitative result in Pfaffian theory and it underlies many effective results on sets defined by Pfaffian equalities and inequalities, including bounds on connected components, Betti numbers, and stratification complexity~\cite{gabrielov2004complexity,lotz2024partitioning}. The theorem also plays an important role in o-minimal geometry and model theory: quantitative control of Pfaffian sets is a key ingredient in Wilkie’s proof that the real exponential field is o-minimal~\cite{wilkie1996model}, and more broadly facilitates precise study of transcendental functions.

Despite the importance of \khs{} theorem, very little is known about the sharpness of its bound. It is natural to ask whether the dependence on the parameters $\alpha$, $\beta_i$, and $s$ is optimal, either exactly or asymptotically. Specifically, in the bound in Theorem \ref{thm:khovanskii}, asymptotically, the bound has dominant growth of order
\begin{equation}
\label{eqn:khovanskii-growth}
2^{\Theta(s^2)} \, \alpha^s \, (\max_{i \in [n]} \beta_i)^{n+s}.
\end{equation}

One might hope that the bound is sharp in full generality, but this appears far from clear, and probably unlikely (see Discussion in Section \ref{sec:sharpness-s}).
However, for instance, if one fixes $\alpha$, $n$, and $s$, and takes growing $\beta_1=\cdots=\beta_n=\beta$, the bound grows like $O(\beta^{n+s})$, in analogy with Bézout’s theorem in the algebraic case, where $n$ polynomials of degree $\beta$ yields a bound $O(\beta^n)$ - is this optimal? Likewise, one could fix various different sets of parameters and examine the growth of the bound in terms of the growing parameters. The goal of this paper is to investigate the following question.

\begin{openquestion2}
\label{oq:sharpness}
Investigate the sharpness of \khs{} bound from a parameter-by-parameter perspective, studying how the number of real zeros scales when individual parameters grow while others remain fixed.
\end{openquestion2}

\subsection{Our Results}

First, we study the dependence of \khs{} bound on the chain-degree $\alpha$.

\begin{theorem}[store=thm-alpha-power-s,note={Sharpness of \khs{} theorem in $\alpha$}]
\label{thm:alpha}
For every $\alpha, s \in \mathbb{N}$, there exists a Pfaffian function of format $(\alpha, 1, s)$, with at least $\alpha^{s}$ nondegenerate real zeros.
\end{theorem}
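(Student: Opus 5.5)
We take $n=1$ and build the chain one level at a time, so that the number of oscillations multiplies by $\alpha$ at each level; the function of degree $1$ is then just $f=q_s-c$ for a suitable constant $c$. The count is consistent with Theorem~\ref{thm:khovanskii}: for $n=1$, $\beta=1$ the bound is $2^{s(s-1)/2}(\alpha+1)^s\ge\alpha^s$.

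\textbf{Construction.} Work on a bounded open interval $I\subset\R$ and define
\[
q_1'=P_1(x),\qquad q_k'=P_k(q_{k-1})\quad(2\le k\le s).
\]
Each $P_k$ is a univariate polynomial of degree exactly $\alpha$ with $\alpha$ simple real roots. This chain is triangular and every right-hand side has degree $\le\alpha$, so it is a Pfaffian chain of order $s$ and chain-degree $\alpha$. The function $f=q_s-c$ has degree $1$ with respect to it.

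\textbf{Inductive invariant.} The key quantity is the number of monotone laps of $q_k$ on $I$ whose ranges all contain a common interval $J_k$. The induction hypothesis is that $q_{k-1}$ has $N_{k-1}$ strictly monotone laps on $I$, each of whose images contains a fixed open interval $J_{k-1}$. We place the $\alpha$ roots of $P_k$ inside $J_{k-1}$. On each lap, $q_{k-1}$ passes every root transversally, so $q_k'=P_k(q_{k-1})$ has at least $\alpha N_{k-1}$ simple sign changes. Hence $q_k$ has at least $\alpha N_{k-1}$ nondegenerate critical points and about $\alpha N_{k-1}+1$ laps. Starting from $q_1$, whose derivative has $\alpha$ simple roots in $I$ (roughly $\alpha+1$ laps), the final step gives $q_s$ about $\alpha^{s}$ laps. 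A value $c$ lying in the common range of all of them is then crossed transversally at least $\alpha^s$ times, which yields $\alpha^s$ nondegenerate zeros of $f$.

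\textbf{Main obstacle.} The hard part is the common-range requirement. The laps of $q_k=\int P_k(q_{k-1})\,dx$ have heights equal to the integrals of $P_k(q_{k-1})$ between consecutive sign changes. These heights vary, and consecutive critical values alternate between local maxima and minima that need not be nested. I would control this with a near-equioscillation argument in the spirit of Chebyshev polynomials.
\begin{itemize}
\item First, make the laps of $q_{k-1}$ arise from an essentially periodic pattern. One way is to scale so that on each lap $q_{k-1}$ is $C^1$-close to an affine reparametrization of a single model lap. Then every integral between consecutive sign changes is close to one of finitely many model values.
\item Second, choose the roots of $P_k$ symmetrically, for instance at scaled Chebyshev nodes, so that the alternating integrals are nearly equal in absolute value. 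The local maxima of $q_k$ then lie above, and its local minima below, a common band $J_k$.
\item If exact symmetry fails, fall back on a perturbation argument. Because all zeros in play are nondegenerate, the configuration is stable under small changes of the roots, so it suffices to exhibit a degenerate limiting configuration and perturb. An example is a rescaling $P_k(y)=\varepsilon\prod_j(y-a_j)$ in which $q_k$ becomes, after renormalization, a fixed oscillating profile.
\end{itemize}
Carrying out this height control uniformly over all $N_{k-1}$ laps is the step I expect to need the most care.
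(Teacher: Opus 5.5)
Your degree bookkeeping is right: $q_k'=P_k(q_{k-1})$ has degree $\alpha$, so $q_s-c$ has format $(\alpha,1,s)$. The count of simple zeros of $q_k'$ is also fine. The gap is the step you defer. Getting $\alpha^s$ laps of $q_s$ that share a common value is the entire content of the lower bound, and with your recipe it can fail outright.

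The cause is that $q_k'$ contains no factor $q_{k-1}'$, so $q_k$ is not a function of $q_{k-1}$.
\begin{itemize}
\item A sub-lap of $q_k$ inside a lap of $q_{k-1}$ has height $\int_{a_j}^{a_{j+1}}|P_k(y)|\,|q_{k-1}'(x(y))|^{-1}\,dy$, where $x(y)$ inverts $q_{k-1}$ on that lap. This depends on the traversal speed and so changes from lap to lap.
\item Each critical point of $q_{k-1}$ creates a turn-around lap of $q_k$, where $q_{k-1}$ leaves $[a_1,a_\alpha]$ and returns. Its height is unrelated to the others and, for even $\alpha$, always has the same sign.
\end{itemize}
The result is a systematic drift. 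For example, take $\alpha=s=2$, $q_1=x^3-3x$ (so $J_1=(-2,2)$) and $P_2(y)=y^2-1$. Normalizing $q_2$ to vanish at its first critical point, its critical values are approximately $0,\,-0.23,\,1.96,\,1.50,\,3.69,\,3.46$. The three decreasing laps have pairwise disjoint ranges, and no level is attained more than $3<\alpha^2$ times.

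Your remedies do not touch this.
\begin{itemize}
\item Replacing $P_k$ by $\varepsilon P_k$ only multiplies $q_k-q_k(x_0)$ by $\varepsilon$. The renormalized profile is unchanged, so there is no nontrivial limit to perturb from.
\item Chebyshev nodes for $P_k$ cannot compensate for the lap-dependent weight $1/|q_{k-1}'|$ or for the turn-around laps.
\item Nothing in the construction makes the laps of $q_{k-1}$ near-copies of one model lap; this already fails for the polynomial $q_1$.
\end{itemize}
Tuning the roots can rescue small cases by intermediate-value arguments, but you give no mechanism that works uniformly at every level.

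The paper avoids height control by composing: $f_k=f_1\circ f_{k-1}$ with $f_1=e^{ap}$. For large $a$, $f_1$ maps each of $\alpha$ disjoint critical-point-free intervals $I_{(i)}$ onto a set containing $\bigcup_j I_{(j)}$. Pulling back gives $\alpha^s$ disjoint intervals $I_\omega$ with $f_s(I_\omega)\supseteq I_\omega$. Each contains a zero of $f_s(x)-x$, nondegenerate because $|f_s'|>1$ there. The common range is automatic precisely because of the chain-rule factor your $q_k'$ omits.

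That factor costs degree, however. With $f_0=x$, $f_k'=a^k\prod_{j=1}^{k}p'(f_{j-1})\,f_j$ has degree $k\alpha$. So the composed chain, as written, has chain-degree $s\alpha$ rather than $\alpha$, which is still enough for sharpness in $\alpha$ at fixed $s$.

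If you want chain-degree exactly $\alpha$, you can recover composition approximately by letting $q_k$ appear on the right:
\begin{itemize}
\item set $q_1'=\lambda\,(T_\alpha(x)-q_1)$ and $q_k'=\lambda\,(T_\alpha(q_{k-1})-q_k)$, with $T_\alpha$ the Chebyshev polynomial and $q_k(-1)=T_{\alpha^k}(-1)$;
\item variation of constants gives $\sup_{[-1,1]}|q_k-T_{\alpha^k}|\le L\sup_{[-1,1]}|q_{k-1}-T_{\alpha^{k-1}}|+O(1/\lambda)$, with $L$ a Lipschitz constant of $T_\alpha$ on $[-2,2]$;
\item hence $q_s\to T_\alpha^{\circ s}=T_{\alpha^s}$ uniformly on $[-1,1]$.
\end{itemize}
Then, for $\lambda$ large and any regular value $c$ of $q_s$ with $|c|<\tfrac12$ (which exists by Sard), $q_s-c$ alternates in sign at the $\alpha^s+1$ points $\cos(j\pi/\alpha^s)$. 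So it has at least $\alpha^s$ nondegenerate zeros.
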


The proof of Theorem~\ref{thm:alpha}, given in Section~\ref{sec:proof-alpha}, is based on an analytic construction. We first build a Pfaffian function with carefully controlled local behaviour, allowing us to identify $\alpha$ disjoint intervals on which it admits inverse branches with a strong covering property. Iteratively pulling these intervals back along the inverse branches produces a tree of $\alpha^s$ disjoint intervals (see Figure \ref{fig:interval-construction}), and a fixed-point argument then yields a distinct nondegenerate zero in each interval. Consequently, for fixed $n$, $s$, and $\beta_i$, the dependence on the chain-degree parameter $\alpha$ in \khs{} theorem (cf.~\eqref{eqn:khovanskii-growth}) is asymptotically sharp.

Next, we study the dependence on the degree parameters $\beta_i$ and the ambient dimension $n$. We prove the following theorem.

\begin{theorem}[store=thm-beta-growth, note={Sharpness of \khs{} theorem in $\beta$ and $n$}]
\label{thm:beta-growth}
For fixed $n$, $s$, there exists a Pfaffian chain $\vec{q}$ defined on $n$ variables of order $s$ such that, for every $\beta \in \mathbb{N}$, there exist $n$ Pfaffian functions each of format $(s,\beta,s)$ having at least
$\Omega_{n, s}(\beta^{n+s})$ regular common zeros.
\end{theorem}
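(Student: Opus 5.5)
The plan is to put all the transcendence into a single variable $x_1$, so that $f_1$ contributes $\Theta(\beta^{s+1})$ zeros on its own. The remaining coordinates then contribute a polynomial grid of $\beta^{n-1}$ points.

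\textbf{The chain.} Fix real numbers $\lambda_1,\dots,\lambda_s>0$ that are linearly independent over $\mathbb{Q}$, for instance $\lambda_j=\sqrt{p_j}$ with $p_j$ the $j$-th prime. Take the chain $\vec q=(q_1,\dots,q_s)$ with $q_j(\mathbf{x})=e^{\lambda_j x_1}$ on $\mathbb{R}^n$. The chain is triangular, since $\partial q_j/\partial x_1=\lambda_j q_j$ and all other partial derivatives vanish. Its defining polynomials have degree $1\le s$, so any function built over it has chain-degree at most $s$; if the format is meant exactly, one can pad with an inert higher-degree term. The chain does not depend on $\beta$, as the statement requires.

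\textbf{Reduction to one variable.} Every polynomial $P(x_1,q_1,\dots,q_s)$ of total degree at most $\beta$ becomes, after substitution, a quasipolynomial
\[
\sum_{\mathbf{k}\in\mathbb{N}^s,\ |\mathbf{k}|\le\beta} p_{\mathbf{k}}(x_1)\,e^{(\mathbf{k}\cdot\lambda)x_1},\qquad \deg p_{\mathbf{k}}\le \beta-|\mathbf{k}|.
\]
Rational independence of the $\lambda_j$ makes the exponents $\mathbf{k}\cdot\lambda$ pairwise distinct. The functions $x_1^a e^{(\mathbf{k}\cdot\lambda)x_1}$ with $a+|\mathbf{k}|\le\beta$ are therefore $N=\binom{\beta+s+1}{s+1}$ functions with distinct (exponent, power) pairs.

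The key classical input is that such a family spans an extended complete Chebyshev (ECT) space on $\mathbb{R}$. I would prove the zero bound by the standard Pólya--Szegő induction:
\begin{itemize}
\item multiply by the exponential of minus the smallest exponent;
\item differentiate enough times to kill the resulting polynomial coefficient;
\item apply Rolle's theorem.
\end{itemize}
This shows that every nonzero element has at most $N-1$ zeros counted with multiplicity. It follows that for any prescribed points $t_1<\dots<t_{N-1}$ there is a nonzero element vanishing at all of them, namely the determinant of the interpolation matrix with one free row. Because the total multiplicity cannot exceed $N-1$, all these zeros are simple. Let $f_1=P(x_1,\vec q)$ be the resulting Pfaffian function. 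It has degree $\beta$ and $N-1=\Omega_s(\beta^{s+1})$ simple real zeros in $x_1$.

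\textbf{The remaining equations and counting.} For $i=2,\dots,n$ set $f_i=\prod_{m=1}^{\beta}(x_i-m)$. These are polynomials of degree $\beta$ and hence Pfaffian over $\vec q$. The common zeros are $(t,m_2,\dots,m_n)$ with $t$ a zero of $f_1$ and $m_i\in\{1,\dots,\beta\}$, so there are
\[
(N-1)\,\beta^{n-1}=\Omega_{n,s}(\beta^{n+s})
\]
of them. The Jacobian is diagonal, because $f_1$ depends only on $x_1$ and $f_i$ only on $x_i$. Its diagonal entries $f_1'(t)$ and $\prod_{m'\ne m}(m-m')$ are nonzero, since the zeros are simple. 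Hence every common zero is regular.

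\textbf{Main obstacle.} The delicate step is the ECT/interpolation claim for mixed polynomial-exponential families. Specifically, it is the sharp multiplicity bound $N-1$, which is what forces the prescribed zeros to be simple and hence regular. I would prove it carefully by induction on $N$ via the Rolle argument above. That argument needs exactly the distinctness of the exponents $\mathbf{k}\cdot\lambda$, which is where the rational independence of the $\lambda_j$ is used.
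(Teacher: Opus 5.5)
Your proposal is correct, and it takes a genuinely different route from the paper.

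\textbf{The paper's route.} The paper uses the chain of iterated exponentials $q_0=x_1+\cdots+x_n$, $q_i=e^{q_{i-1}}$, which has chain-degree exactly $s$. It needs a separate lemma (Lemma~\ref{lem:algebraic-independence}) showing this chain is algebraically independent, in order to get $\dim W_\beta=\binom{\beta+s+1}{s+1}$. It then produces zeros by a soft Wronskian argument (Proposition~\ref{prop:surjective}): in any finite-dimensional space of real-analytic functions one can prescribe first jets $(h(t_j),h'(t_j))=(0,1)$ at $\lfloor \dim W_\beta/2\rfloor$ points. Simplicity is forced by prescribing the derivative, which costs a factor of $2$ in the count.

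\textbf{Your route.} You take $q_j=e^{\lambda_j x_1}$ with $\mathbb{Q}$-independent $\lambda_j$. Linear independence and the dimension count then follow directly from the distinctness of the exponents $\mathbf{k}\cdot\lambda$. The P\'olya--Szeg\H{o} bound (at most $N-1$ zeros counted with multiplicity) means plain interpolation at $N-1$ points already yields simple zeros. This gives $(N-1)\beta^{n-1}$ regular solutions, compared with the paper's $\lfloor N/2\rfloor\beta^{n-1}$.

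Your construction also proves something slightly stronger. The functions have format $(1,\beta,s)$, so the $\beta^{n+s}$ growth is already attained at chain-degree $1$. No padding is needed, since by the paper's remark on formats, format $(1,\beta,s)$ implies format $(s,\beta,s)$.

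As a side note, $\lambda_j=\sqrt{p_j}$ relies on Besicovitch's theorem. The choice $\lambda_j=\log p_j$ gives $\mathbb{Q}$-independence directly from unique factorisation.

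\textbf{What the paper's approach buys.} Its advantage is robustness. Proposition~\ref{prop:surjective} needs no zero-counting theorem and works for any finite-dimensional space of real-analytic functions. Your argument depends on the specific extended-Chebyshev structure of exponential polynomials, and no comparably clean multiplicity bound is available for, e.g., polynomials in iterated exponentials.
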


Theorem \ref{thm:beta-growth} shows that, for fixed $n$ and $s$, the exponent $n+s$ in the degree parameter $\beta$ appearing in \khs{} bound (c.f. \eqref{eqn:khovanskii-growth}) cannot be improved. In other words, the dependence on the degree parameters is asymptotically optimal. The proof, given in Section \ref{sec:proof-beta}, is based on an interpolation argument for finite-dimensional spaces of real-analytic functions. We first establish that such spaces admit arbitrary first-order jet interpolation at a suitable collection of points, and combine this with an algebraic independence argument for iterated exponentials to construct Pfaffian systems with $\Omega_{n,s}\left(\beta^{n+s}\right)$ regular common zeros. This establishes the sharpness of \khs{} bound in the regime where $n$ and $s$ are fixed.

In contrast, our examples suggest that the dependence of \khs{} bound on the chain order $s$ is likely far from optimal. Even for small values of $s$, \khs{} bound can be significantly larger than the number of zeros observed in explicit constructions. These issues, and cases where stronger bounds are known (Pfaffian functions of the form $P(x,e^x)$), are discussed in Section \ref{sec:sharpness-s}.

\begin{remark}[Combining the $\alpha$ and $\beta$ constructions]
\label{rem:combined}
The construction from Theorem~\ref{thm:alpha} yields a Pfaffian function in one variable with $\alpha^s$ nondegenerate real zeros, defined with respect to a Pfaffian chain of order $s$ and chain-degree $\alpha$. On the other hand, Theorem \ref{thm:beta-growth}, applied with $n-1$ variables, yields a system of $n-1$ Pfaffian equations having $\Omega_{n,s}(\beta^{n+s-1})$ regular common zeros.

Since the two constructions involve disjoint sets of variables, one may take their Cartesian product. Thus we have that for every $n,s,\alpha,\beta \in \mathbb{N}$, there exists a Pfaffian system of format $(\max(\alpha, s),\beta,2s)$ having
\[
\Omega_{n,s}\!\left(\alpha^s\beta^{n+s-1}\right)
\]
regular common real zeros.

Thus the mechanisms responsible for the $\alpha$ and $\beta$ growth in \khs{} bound can occur simultaneously, although the construction increases the order of the underlying Pfaffian chain from $s$ to $2s$. A natural open question emerging from this is - is there a family of Pfaffian systems of order $s$ (not $2s$) having $\Omega_{n, s}(\alpha^s\beta^{n+s})$ regular zeros?
\end{remark}

\subsection{Acknowledgements} 
We thank Prof. Martin Lotz for his write-up on Pfaffian functions, and for useful discussions. AN would like to thank Prof. Nicolai Vorobjov for comments.

\section{Preliminaries}
\label{sec:preliminaries}

\subsection{Pfaffian functions}

We write $\mathbb{R}[Z_1,\dots,Z_r]_{(\le d)}$ for the real vector space of polynomials of total degree at most $d$ in variables $Z_1,\dots,Z_r$. For a smooth function $f$, we write $df$ for its exterior derivative. $[n]$ will denote the set $\{1, \ldots, n\}$.

\begin{definition}[Pfaffian chains and Pfaffian functions]\label{def:pfaffian}
Let $\mathcal{U} \subseteq \mathbb{R}^n$ be open. A \emph{Pfaffian chain} of \emph{order} $s \in \mathbb{Z}_{\ge 0}$ and \emph{chain-degree} $\alpha \in \mathbb{N}$ on $\mathcal{U}$ is a sequence of smooth functions $\vec{q}=(q_1,\dots,q_s)$ such that there exist polynomials $P_{i,j} \in \mathbb{R}[X_1,\dots,X_n,Y_1,\dots,Y_j]_{(\le \alpha)}$ satisfying
\begin{equation}\label{eq:pfaffian-system}
dq_j(\mathbf{x}) = \sum_{i=1}^n P_{i,j}(\mathbf{x}, q_1(\mathbf{x}),\dots,q_j(\mathbf{x}))\,dx_i,
\end{equation}
for all $j=1,\dots,s$.

Let $\beta \in \mathbb{N}$. A \emph{Pfaffian function} (with respect to $\vec{q}$) is a function of the form
\[
f(\mathbf{x}) = P(\mathbf{x}, q_1(\mathbf{x}),\dots,q_s(\mathbf{x}))
\]
where $P \in \mathbb{R}[X_1,\dots,X_n,Y_1,\dots,Y_s]_{(\le \beta)}$.

We say that $f$ has \emph{format} $(\alpha,\beta,s)$.
\end{definition}

\begin{remark}
A function of format $(\alpha,\beta,s)$ is automatically of format $(\alpha_1,\beta_1,s_1)$ for any $\alpha_1 \ge \alpha$, $\beta_1 \ge \beta$, and $s_1 \ge s$. The minimal format is typically difficult to determine in concrete cases.
\end{remark}

\begin{example}[Examples of Pfaffian functions]
\label{example:pfaffian}
Below are several examples of Pfaffian functions.
\begin{enumerate}[label=(\Roman*),ref=(\Roman*)]

\item \label{point:polynomial-example}
Any polynomial of degree $d$ is Pfaffian with respect to the empty chain, hence of format $(\alpha,d,0)$ for any $\alpha \in \N$.

\item
Let $a \in \mathbb{R}$ and $\vec{q} = ({e^{ax}})$. Since $d(e^{ax}) = ae^{ax} = aq_1(x)$, we have that
\[
dq_1(x) = P_{1,1}(x, q_1(x))dx,
\]
where $P_{1,1} \in \R[x, y]$ is $P_{1,1} = aY$. Thus, $\vec{q}$ is a Pfaffian chain of chain-degree $1$ and order $1$. Consequently, any $f(x)=P(x,e^{ax})$ with $P\in\mathbb{R}[X,Y]$ is a Pfaffian function of format $(1,\deg P,1)$.

\item \label{example:iterated-exponential}
For $i\in[s]$, define $q_i(x)=e^{q_{i-1}(x)}$, and $q_0(x)=ax$. Then
\[
dq_i(x)=aq_1(x)\cdots q_i(x)\,dx.
\]
Setting $P_{i,1} = aY_1 \ldots Y_i$, we have that $\vec{q}=(q_1,\dots,q_s)$ is a Pfaffian chain of order $s$ and chain-degree $s$. Consequently, any polynomial $P \in \R[X,e^{aX},e^{e^{aX}},\dots,e^{\circ s}(aX)]$
is therefore Pfaffian of format $(s,\deg P,s)$ with respect to $\vec{q}$.

\item
The function $\tan(x)$ forms a Pfaffian chain of order $1$ and chain-degree $2$ on
\[
\mathbb{R}\setminus\left\{\frac{\pi}{2}+k\pi : k\in\mathbb{Z}\right\},
\]
since
\[
d(\tan(x))=(1+\tan^2(x))\,dx.
\]
Thus any $P \in \R[X, \tan(X)]$ is a Pfaffian function.
\item
We have that \[
\vec{q}=\left(\frac1x,\ln(x)\right)
\]
is a Pfaffian chain on $\mathbb{R}_{>0}$ since
\[
d\left(\frac1x\right) = -\left(\frac1x\right)^2dx,
\qquad
d(\ln(x)) = \frac1x\,dx.
\]
Thus any $P \in \R[X, \ln(X)]$ is a Pfaffian function.
\item \label{point:x-to-m-example}
For $m\in\mathbb{R}$,
\[
\vec{q}=\left(\frac1x,x^m\right)
\]
is a Pfaffian chain on $\mathbb{R}_{>0}$ since
\[
d\left(\frac{1}{x}\right) = -\frac{1}{x^2}dx, \qquad \text{and} \qquad
d(x^m)=m\cdot \frac1x\cdot x^m\,dx.
\]
Thus any $P \in \R\left[\frac{1}{X}, X^m\right]$ is a Pfaffian function.

\item \label{point:fewnomials-example}
Let
\[
m_{i_1,\dots,i_n}
=
a_{i_1,\dots,i_n}x_1^{i_1}\cdots x_n^{i_n}
\]
be a monomial. Then
\[
\left(\frac1{x_1},\dots,\frac1{x_n},m_{i_1,\dots,i_n}\right)
\]
is a Pfaffian chain on $\R^n \setminus \{0\}$.

More generally, a polynomial with $s$ monomials is Pfaffian of format $(2,1,n+s)$. This is the departure point for \khs{} bound on the zeros of fewnomial systems.

\end{enumerate}
\end{example}

\begin{definition}[Algebraically Independent Pfaffian Chain]
\label{defn:algebraically-independent}
A Pfaffian chain $\vec{q} = (q_1, \ldots, q_s)$ of order $s$ is \emph{algebraically independent over $\R(X_1, \ldots, X_n)$} if for every nonzero polynomial $P \in \R[X_1, \dots, X_n, Y_1, \dots, Y_s]$, we are guaranteed that $P(x_1, \ldots, x_n, q_1(\mathbf{x}), \ldots, q_s(\mathbf{x}))$ is not identically zero.
\end{definition}

\subsection{Real-analytic functions and jet evaluation}

Throughout, if $I\subseteq\mathbb{R}$ is an open interval, then
$C^\omega(I)$ denotes the vector space of real-analytic functions on $I$.

For $t\in I$, the \emph{first jet} of a function $f\in C^\omega(I)$ at $t$
is the pair
\[
j_t^1f=(f(t),f'(t))\in\mathbb{R}^2.
\]
Given distinct points $t_1,\ldots,t_m\in I$, the corresponding
\emph{first jet evaluation map} is the linear map
\[
J_{t_1,\ldots,t_m}:W\longrightarrow\mathbb{R}^{2m},
\qquad
f\longmapsto
\bigl(j_{t_1}^1f,\ldots,j_{t_m}^1f\bigr),
\]
or, equivalently,
\[
J_{t_1,\ldots,t_m}(f)
=
\left(
f(t_1),f'(t_1),\ldots,f(t_m),f'(t_m)
\right).
\]

We shall repeatedly use the following standard uniqueness principle.

\begin{theorem}[{Identity theorem; see for e.g. \cite[Section 1.2]{krantz2002primer}}]
\label{thm:identity}
Let $I\subseteq\mathbb{R}$ be an open interval, and let
$f\in C^\omega(I)$.
If $f$ vanishes on a non-empty open subset of $I$, then
$f\equiv0$ on $I$.
\end{theorem}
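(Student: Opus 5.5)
The plan is the standard connectedness argument. Consider the set
\[
S=\{t\in I:\ f^{(k)}(t)=0 \text{ for all } k\ge 0\}.
\]
We show that $S$ is non-empty, closed in $I$, and open. Since an open interval is connected, this forces $S=I$, and in particular $f\equiv 0$ on $I$ (the case $k=0$).

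\emph{Non-emptiness.} Let $U\subseteq I$ be a non-empty open set on which $f$ vanishes. Pick $t_0\in U$ and a small interval $(t_0-\delta,t_0+\delta)\subseteq U$. On this interval $f$ is identically zero, so every derivative $f^{(k)}$ vanishes there as well. Hence $t_0\in S$.

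\emph{Closedness.} Each $f^{(k)}$ is continuous on $I$, because $f$ is smooth. Therefore each zero set $\{f^{(k)}=0\}$ is closed in $I$. The set $S$ is the intersection of these countably many relatively closed sets, so it is closed in $I$.

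\emph{Openness.} This is the only step that genuinely uses analyticity rather than mere smoothness. Take $t_1\in S$. By the definition of $C^\omega(I)$, there is $r>0$ with $(t_1-r,t_1+r)\subseteq I$ on which
\[
f(t)=\sum_{k\ge0}\frac{f^{(k)}(t_1)}{k!}(t-t_1)^k .
\]
All the coefficients vanish because $t_1\in S$, so $f\equiv0$ on $(t_1-r,t_1+r)$. As in the non-emptiness step, all derivatives of $f$ then vanish on this interval, so $(t_1-r,t_1+r)\subseteq S$.

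\emph{Conclusion.} $S$ is a non-empty subset of the connected space $I$ that is both open and closed, so $S=I$.

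There is no real obstacle here. The one point requiring care is to use the definition of real-analyticity in the form ``locally equal to its Taylor series at each point''. If instead $C^\omega$ were defined as ``locally given by some convergent power series'', one would first note that such a series is necessarily the Taylor series, by termwise differentiation. It also matters to work with the set where all derivatives vanish, rather than just the zero set of $f$: the zero set of $f$ alone is closed but not obviously open.
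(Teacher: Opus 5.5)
Your proof is correct: the set where all derivatives vanish is non-empty, closed, and open in the connected interval $I$, so it equals $I$, and your remark that a locally convergent power series must be the Taylor series covers the one definitional subtlety. The paper gives no proof of its own here and only cites Krantz--Parks; your connectedness argument is essentially the standard proof of that result.
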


If $f,g\in C^\omega(I)$, their \emph{Wronskian} is the real-analytic
function
\[
W(f,g):=
\begin{vmatrix}
f & g\\
f'& g'
\end{vmatrix}
=
fg'-f'g.
\]

The following well-known fact is specific to the real-analytic setting.

\begin{proposition}
\label{prop:wronskian}
Let $f,g\in C^\omega(I)$.
Then the following are equivalent:
\begin{enumerate}
    \item $f$ and $g$ are linearly dependent over $\mathbb{R}$;
    \item $W(f,g)\equiv0$ on $I$.
\end{enumerate}
\end{proposition}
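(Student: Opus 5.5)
We prove that linear dependence of $f$ and $g$ is equivalent to the identical vanishing of $W(f,g)$ on $I$.

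\emph{Dependence implies vanishing Wronskian.} Suppose $af+bg\equiv 0$ with $(a,b)\neq(0,0)$. Differentiating gives $af'+bg'\equiv 0$ as well. Hence at every $t\in I$ the matrix $\begin{pmatrix} f(t) & g(t)\\ f'(t) & g'(t)\end{pmatrix}$ has the nonzero vector $(a,b)^T$ in its kernel, so its determinant $W(f,g)(t)$ vanishes.

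\emph{Vanishing Wronskian implies dependence.} Assume $W(f,g)\equiv 0$ on $I$. If $f\equiv 0$, the pair is trivially dependent. Otherwise, since $f$ is real-analytic and not identically zero, Theorem~\ref{thm:identity} says $f$ cannot vanish on any open subinterval. Its zero set therefore has empty interior. Zeros of a nonzero analytic function are in fact isolated, but we only need some point $t_0$ with $f(t_0)\neq 0$. By continuity, $f\neq 0$ on a small open interval $U\ni t_0$. On $U$ the quotient rule gives
\[
\left(\frac{g}{f}\right)' = \frac{fg'-f'g}{f^2} = \frac{W(f,g)}{f^2} \equiv 0,
\]
so $g/f$ equals a constant $c$ on $U$, that is, $g-cf\equiv 0$ on $U$. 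The function $g-cf$ lies in $C^\omega(I)$ and vanishes on the nonempty open set $U$. Theorem~\ref{thm:identity} then gives $g-cf\equiv 0$ on all of $I$, which is a nontrivial linear dependence.

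The only delicate step is passing from local to global dependence, and that is exactly where analyticity is needed. For merely smooth functions the statement is false: take $f=x^2$ and $g=x|x|$, whose Wronskian vanishes identically although they are independent on any interval containing $0$. The identity theorem rules out such behaviour here, first by guaranteeing a point where $f\neq 0$, and then by spreading the relation $g=cf$ from $U$ to all of $I$.
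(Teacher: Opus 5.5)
Your proof is correct and is essentially the paper's argument: the paper divides by $g$ rather than $f$, but it likewise gets a constant ratio near a point of non-vanishing and then uses the identity theorem to extend $f=cg$ to all of $I$. One small quibble with your closing aside: $x|x|$ is only $C^1$, not $C^\infty$, so it shows the statement fails for $C^1$ functions, whereas a genuinely smooth counterexample pairs $e^{-1/x^2}$ on $x>0$ (extended by $0$ for $x\le 0$) with its reflection.
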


\begin{proof}
If $f=cg$ for some constant $c\in\mathbb{R}$, then obviously $W(f,g)\equiv0$ by direct computation.

Conversely, suppose $W(f,g)\equiv0$. If $g\equiv0$, then $f$ and $g$ are trivially linearly dependent. Otherwise, Theorem \ref{thm:identity} implies that the zero set of $g$ has empty
interior, so there exists a non-empty open interval
$I_0\subseteq I$ on which $g$ does not vanish.
On $I_0$,
\[
\left(\frac{f}{g}\right)'
=
\frac{f'g-fg'}{g^2}
=
-\frac{W(f,g)}{g^2}
=
0.
\]
Hence $f/g$ is constant on $I_0$, so $f=cg$ on $I_0$ for some
$c\in\mathbb{R}$.
The function $f-cg$ is real-analytic and vanishes on the non-empty open
interval $I_0$, so the identity theorem implies that $f=cg$ on all of
$I$.
\end{proof}

\section{Proofs of Main Theorems}
\label{sec:sharpness}

In this section we prove our main theorems.

\subsection{Proof of Theorem \ref{thm:alpha}}
\label{sec:proof-alpha}

In this section we prove sharpness of \khs{} bound in the chain-degree parameter $\alpha$, for fixed $s$, $n$, and $\beta_i$. An important step is the following Lemma.

\begin{lemma}
\label{lem:alpha}
Let
\[
p(x)=(-1)^{\alpha+1}\prod_{k=1}^{\alpha}(x-k), \qquad \text{and} \qquad f_1(x) = e^{ap(x)}.
\]
$\vec{q} = (f_1)$ is a Pfaffian chain of order $1$ and chain-degree $\alpha$, and $F(x)=f_1(x)-x$ is a Pfaffian function defined with respect to $\vec{q}$ of format $(\alpha,1,1)$. There exists $a_0>0$ such that for every $a\ge a_0$, $F(x)$ has at exactly $\alpha+1$ nondegenerate real zeros $\xi_0 < \xi_1 < \ldots < \xi_\alpha$.

Moreover, for every sufficiently large $a$, there exist $\eps_1, \ldots, \eps_{\alpha} > 0$, and compact intervals $I_{(1)},\dots,I_{(\alpha)}$ with $I_{(i)} = [\xi_i - \eps_i, \xi_i + \eps_i]$, such that
\begin{enumerate}[label=(\Roman*),ref=(\Roman*)]
\item \label{lem:interval-disjoint} The intervals are pairwise disjoint and $\xi_i\in \operatorname{int}(I_{(i)})$, and thus $F$ has no zero in $I_{(i)}\setminus\{\xi_i\}$.
\item \label{lem:no-critical} The function $f_1(x)$ has no critical points on a neighbourhood of $I_{(i)}$, and hence is a local
diffeomorphism there.
\item \label{lem:covering} For every $i \in [\alpha]$,
\[
f_1(I_{(i)}) \supseteq \bigcup_{j=1}^{\alpha} I_{(j)}.
\]
\end{enumerate}
\end{lemma}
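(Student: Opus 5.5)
The plan is to obtain everything from the sign pattern of $p$ together with the large parameter $a$: $f_1=e^{ap}$ is exponentially small where $p<0$ and exponentially large where $p>0$, so in the limit $a\to\infty$ it behaves like a step function jumping at the integers $1,\dots,\alpha$.

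\textbf{Chain and format.} These are immediate. We have $f_1'=a\,p'(x)f_1$, and $a\,p'(X)Y$ has degree $\le \alpha$ (indeed $\deg p'=\alpha-1$), so $(f_1)$ is a chain of order $1$ and chain-degree $\alpha$. Since $F=Y_1-X$ is linear, $F$ has format $(\alpha,1,1)$.

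\textbf{Sign pattern of $p$.} Next I record where $p$ is positive and negative. For $x<1$, $\prod(x-k)$ has sign $(-1)^\alpha$, so $p<0$ there. The sign of $p$ then alternates across each integer $k\in[\alpha]$, and $p(k)=0$, so $f_1(k)=1$. Also $p'(k)\neq 0$, since the roots of $p$ are simple, and all critical points of $p$ lie strictly between consecutive integers.

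Fix $\delta\in(0,1/4)$ so small that $p$ has no critical point in $\bigcup_k[k-2\delta,k+2\delta]$. Let
\[
c=\min\bigl\{|p(x)| : x\in[1/2,\alpha+1],\ |x-k|\ge\delta \text{ for all } k\in[\alpha]\bigr\}>0.
\]

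\textbf{Counting zeros.} Since $f_1>0$, there are no zeros with $x\le 0$. On $[1/2,\alpha+1]$ away from the $\delta$-neighbourhoods of the integers, $F$ has no zero once $a$ is large: either $f_1\le e^{-ac}<1/2\le x$, or $f_1\ge e^{ac}>\alpha+1\ge x$. For $x>\alpha+1$ I use the end behaviour of $p$. If $\alpha$ is odd then $p\to+\infty$ and in fact $p\ge c$ on $[\alpha+\delta,\infty)$, so $f_1>x$ there for large $a$. If $\alpha$ is even then $p<0$ and $f_1<1<x$. On $(0,1/2]$ we have $p\le -c'<0$, so $f_1\le e^{-ac'}$; $F$ goes from positive at $0$ to negative at $1/2$, and $F'=a p' f_1 x-1<0$ there for large $a$. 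This gives exactly one zero $\xi_0\approx e^{ap(0)}$, and it is nondegenerate.

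On each $[k-\delta,k+\delta]$, $f_1$ is strictly monotone and moves from $\le e^{-ac}$ to $\ge e^{ac}$ (or the reverse), so $F$ changes sign. Moreover $F'=a\,p'(x)f_1(x)-1$ is dominated by the first term wherever $f_1\approx x\ge 1/2$. This gives a unique zero $\xi_k$ with $F'(\xi_k)\neq 0$, and $\xi_k\to k$ as $a\to\infty$, because $a\,p(\xi_k)=\ln\xi_k$ is bounded. Altogether there are exactly $\alpha+1$ nondegenerate zeros. The step I expect to need the most care is making this count uniform: I have to check that $f_1$ cannot cross the line $y=x$ twice in a transition window, and that no zeros escape to infinity. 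This is handled by the derivative domination estimate above and by the end-behaviour analysis.

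\textbf{The intervals.} Set $\eps_i=\delta$ and $I_{(i)}=[\xi_i-\delta,\xi_i+\delta]$. For large $a$ we have $|\xi_i-i|<\delta$, so $I_{(i)}\subset[i-2\delta,i+2\delta]$. These intervals are pairwise disjoint because $\delta<1/4$, and they contain no other zero of $F$ by the uniqueness above; this gives \ref{lem:interval-disjoint}. They avoid the critical points of $p$, hence of $f_1$, because $f_1'=a p' f_1$ and $f_1>0$; this gives \ref{lem:no-critical}.

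For \ref{lem:covering}, the two endpoints of $I_{(i)}$ lie at distance $\ge \delta-|\xi_i-i|\ge\delta/2$ from $i$, on opposite sides of it. Hence, with $c''=\min\{|p(x)|:\delta/2\le|x-i|\le 2\delta\}$, $f_1$ takes values $\le e^{-ac''}$ at one endpoint and $\ge e^{ac''}$ at the other. By monotonicity,
\[
f_1(I_{(i)})\supseteq[e^{-ac''},e^{ac''}]\supseteq[1/2,\alpha+1]\supseteq\bigcup_j I_{(j)}
\]
for all sufficiently large $a$.
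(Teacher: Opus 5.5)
Your proof is correct. It differs from the paper's mainly in how the zero count is closed.

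\textbf{The paper's route.} The paper locates the $\alpha+1$ zeros by explicit Intermediate Value Theorem estimates in shrinking windows: $\xi_0\in(0,1/a)$, $\xi_1=1$, and $\xi_k$ within $1/\sqrt a$ of $k$ on the side dictated by the parity of $k$. It proves nondegeneracy by contradiction along a sequence $a_l\to\infty$. It then gets ``exactly $\alpha+1$'' by invoking \khs{} theorem itself, whose bound for format $(\alpha,1,1)$ with $n=s=1$ is precisely $\alpha+1$. The claim in (I) that $F$ has no other zero in $I_{(i)}$ is read off from that global count.

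\textbf{Your route.} You partition the line into regions fixed independently of $a$: $x\le 0$, $(0,1/2]$, the $\delta$-windows around the integers, their complement in $[1/2,\alpha+1]$, and the tail. You show directly that each window holds exactly one zero, because at any zero $F'=a\,p'(x)\,x-1$ has the constant sign of $p'$. This buys several things:
\begin{itemize}
\item the lemma becomes self-contained, with no appeal to the theorem whose sharpness the paper is studying;
\item uniqueness inside each $I_{(i)}$, as (I) requires, comes for free;
\item nondegeneracy follows from a uniform quantitative estimate rather than a compactness/contradiction argument;
\item you get the sharper localisation $|\xi_k-k|=O(1/a)$.
\end{itemize}
The paper's route is shorter on the counting side and gives explicit root windows. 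Your treatment of (II) and (III) is essentially the paper's: the sign change of $p$ at $i$, endpoints kept a fixed distance from $i$, and exponential growth or decay in $a$.

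\textbf{Two small repairs.}
\begin{itemize}
\item For odd $\alpha$, the bound $p\ge c$ on $[\alpha+\delta,\infty)$ does not by itself give $f_1>x$, since $x$ is unbounded. Use instead $p(x)\ge x-\alpha$ for $x\ge\alpha+1$. Then $e^{a(x-\alpha)}>x$ there as soon as $a>\max(1,\ln(\alpha+1))$.
\item On $(0,1/2]$ the derivative should read $F'=a\,p'f_1-1$; your version has a stray factor of $x$.
\end{itemize}
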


\begin{proof}
First, we establish that $\vec{q}$ is a Pfaffian chain as claimed. Since 
\[
f_1'(x)=ap'(x)f_1(x),
\]
we have that $\vec{q}$ is a Pfaffian chain of order $1$ and chain-degree $\alpha$. Thereby, $F$ is a Pfaffian function defined with respect to $\vec{q}$ of format $(\alpha, 1, 1)$.

We shall now locate $\alpha+1$ distinct zeros of $F(x)$.

Let $a \geq 100$ (the lower bound of 100 will help later).
    
\textbf{Root near 0:} We have
\begin{align*}
    &F\left(\frac{1}{a}\right) \nonumber \\
    &\qquad = \exp\left(-a\prod_{k=1}^\alpha \left\lvert k - \frac{1}{a}\right\rvert\right) - \frac{1}{a}\\
    &\qquad \leq \exp\left(-a\prod_{k=1}^\alpha \left\lvert k - \frac{1}{2}\right\rvert\right) - \frac{1}{a} \eqcomment{since $a \geq 2$}\\
    &\qquad \leq \exp\left(-\frac{a}{2}\right) - \frac{1}{a} \eqcomment{the terms in the product for $k > 1$ are at least 1}\\
    &\qquad < 0,
\end{align*}
where the last inequality is because: the function $xe^{-x/2}$ attains its maximum at $x=2$ with value $2e^{-1} < 1$, so $ae^{-a/2} < 1 \leq \frac{a}{a}$ for all $a > 0$, giving $e^{-a/2} < \frac{1}{a}$. Since $F\left(\frac{1}{a}\right) < 0$, and $F(0) > 0$, by the Intermediate Value Theorem, there is a root $\xi_0(a) \in \left(0, \frac{1}{a}\right)$.

\textbf{Root at 1:} Since $p(1) = 0$, we have $F(1) = e^0 - 1 = 0$, which means $\xi_1(a) = 1$ is a root.

\textbf{Root near $k$ for $2 \leq k \leq \alpha$:} First we will show that for odd $k$, there is a root $\xi_k(a) \in (k, k + \frac{1}{\sqrt{a}})$. Note $F(k) = e^0 - k < 0$. The polynomial $p(x)$ evaluated at $x = k + \frac{1}{\sqrt{a}}$ is positive because the factors $(x-1), \ldots, (x-k)$ are all positive while $(x-(k+1)), \ldots, (x-\alpha)$ are all negative, giving a sign of $(-1)^{\alpha+1}\cdot(-1)^{\alpha-k} = (-1)^{1-k} = 1$ since $k$ is odd. It follows that
\begin{align*}
    &F\left(k+\frac{1}{\sqrt{a}}\right) \\
    &\qquad = \exp\left(a\prod_{j = 1}^k \left(k + \frac{1}{\sqrt{a}} - j\right) \cdot \prod_{j=k+1}^{\alpha}\left(j - k - \frac{1}{\sqrt{a}}\right) \right) - \left(k + \frac{1}{\sqrt{a}}\right)\\
    &\qquad= \exp\left(a\prod_{r = 0}^{k-1} \left(r + \frac{1}{\sqrt{a}}\right) \cdot \prod_{s=1}^{\alpha-k}\left(s - \frac{1}{\sqrt{a}}\right) \right) - \left(k + \frac{1}{\sqrt{a}}\right) \quad \eqcomment{re-indexing}\\
    &\qquad\geq \exp\left(\sqrt{a}\prod_{r = 1}^{k-1} \left(r + \frac{1}{\sqrt{a}}\right) \cdot \left( 1 - \frac{1}{\sqrt{a}}\right) \right) - \left(k + \frac{1}{\sqrt{a}}\right)\\
    &\qquad\geq \exp\left(\sqrt{a}\prod_{r = 1}^{k-1} r  \cdot \frac{9}{10} \right) - \left(k + \frac{1}{\sqrt{a}}\right)\\
    &\qquad\geq \exp\left(9(k-1)!  \right) - \left(k + \frac{1}{10}\right)\\
    &\qquad> 0,
\end{align*}
where the last line holds for all integers $k \geq 2$ by an easy induction. Thus the Intermediate Value Theorem gives a root $\xi_k(a) \in (k, k + \frac{1}{\sqrt{a}})$.

The case where $k$ is even is nearly identical, but the root lies in $(k - \frac{1}{\sqrt{a}}, k)$ instead. Indeed, $F(k) = 1 - k < 0$, and the polynomial is positive at $x = k - \frac{1}{\sqrt{a}}$ since the sign is $(-1)^{\alpha+1}\cdot(-1)^{\alpha-k} = (-1)^{1-k} = -1$ when $k$ is even, but all the factors in the product contribute a net negative sign as well, making the product positive. More concretely,
\begin{align*}
    &F\left(k-\frac{1}{\sqrt{a}}\right) \\
    &\qquad = \exp\left(a\prod_{j = 1}^{k-1} \left(k - \frac{1}{\sqrt{a}} - j\right) \cdot \prod_{j=k}^{\alpha}\left(j - k + \frac{1}{\sqrt{a}}\right) \right) - \left(k - \frac{1}{\sqrt{a}}\right)\\
    &\qquad = \exp\left(a\prod_{r = 1}^{k-1} \left(r - \frac{1}{\sqrt{a}}\right) \cdot \prod_{s=0}^{\alpha-k}\left(s+ \frac{1}{\sqrt{a}}\right) \right) - \left(k - \frac{1}{\sqrt{a}}\right) \quad \eqcomment{re-indexing}\\
    &\qquad \geq \exp\left(\sqrt{a}\prod_{r = 1}^{k-1} \left(r - \frac{1}{\sqrt{a}}\right) \cdot \prod_{s=1}^{\alpha-k}\left(s+ \frac{1}{\sqrt{a}}\right) \right) - \left(k - \frac{1}{\sqrt{a}}\right) \\
    &\qquad \geq \exp\left(\sqrt{a}\prod_{r = 1}^{k-1} \left(r - \frac{1}{\sqrt{a}}\right) \right) - \left(k + \frac{1}{10}\right) \\
    &\qquad \geq \exp\left(10\cdot \frac{9}{10}\prod_{r = 2}^{k-1} \left(r - 1\right) \right) - \left(k + \frac{1}{10}\right) \\
    &\qquad \geq \exp\left(9(k-2)! \right) - \left(k + \frac{1}{10}\right) \\
    &\qquad > 0,
\end{align*}
where in the third line we used that the $s=0$ factor is $\frac{1}{\sqrt{a}}$, which together with $a$ gives $\sqrt{a}$, and in the fourth line we dropped the (positive) factors $s \geq 1$. Hence the result follows by the Intermediate Value Theorem.

In summary, we have roots in the following intervals:
\[
\underbrace{\left(0, \frac{1}{a}\right)}_{\xi_0(a)},\; \underbrace{1}_{\xi_1(a)}, \; \underbrace{\left(2 - \frac{1}{\sqrt{a}}, 2\right)}_{\xi_2(a)}, \; \underbrace{\left(3, 3 + \frac{1}{\sqrt{a}}\right)}_{\xi_3(a)}, \; \underbrace{\left(4 - \frac{1}{\sqrt{a}}, 4\right)}_{\xi_4(a)}, \; \underbrace{\left(5, 5 + \frac{1}{\sqrt{a}}\right)}_{\xi_5(a)}, \; \ldots,
\]
which are guaranteed to be non-overlapping since $a \ge 100$.

\textbf{Nondegeneracy for $k \neq 0$:} We need $g(\xi_k(a)) \neq 0$ where $g(x) = p'(x) - \frac{1}{ax}$. We shall show that for each $k$, there is some $M_k > 0$ such that $\xi_k(a)$ is a nondegenerate root of $F(x)$ on $a \geq M_k$. Suppose for contradiction that no such $M_k$ exists. Then there is a sequence $(a_l)$ with $a_l \to +\infty$ and $g(\xi_k(a_l)) = 0$ for every $l$. But $\lvert \xi_k(a_l) - k \rvert < \frac{1}{\sqrt{a_l}} \to 0$, so $\xi_k(a_l) \to k$. Taking $l \to \infty$ in $g(\xi_k(a_l)) = 0$ and using continuity of $p'$ together with $\frac{1}{a_l \xi_k(a_l)} \to 0$ gives $p'(k) = 0$. A simple differentiation of $p$ shows that $k$ cannot be a root of $p'(x)$ ($k$ is a simple root of $p$), which gives us the contradiction we need to establish that $M_k$ exists.

\textbf{Nondegeneracy for $k = 0$:} We have $F'(x) = ap'(x)e^{ap(x)} - 1$. A degenerate root at $\xi_0(a)$ requires $F'(\xi_0(a)) = 0$. Again, we will show that there is some $M_0 > 0$ such that $\xi_0(a)$ is a nondegenerate root of $F(x)$ as long as $a \ge M_0$. Suppose for contradiction that no $M_0$ exists. Then there is a sequence $(a_l)$ with $a_l \to +\infty$ and $F'(\xi_0(a_l)) = 0$ for every $l$, i.e., for all $l$,
\[
    a_l p'(\xi_0(a_l))\, e^{a_l p(\xi_0(a_l))} = 1.
\]
However, since $\xi_0(a_l) \in (0, \frac{1}{a_l})$,
\begin{align*}
    \bigl\lvert a_l\, e^{a_l p(\xi_0(a_l))} \bigr\rvert &\leq a_l \exp\left(a_l p\left(\tfrac{1}{a_l}\right)\right) \nonumber \\
     &= a_l \exp\left(-a_l\prod_{k=1}^{\alpha}\left(k - \frac{1}{a_l} \right)\right)\nonumber \\
    &\leq a_l \exp\left(-a_l\prod_{k=1}^{\alpha}\left(k - \frac{1}{100} \right)\right) \nonumber \\
    &= \frac{a_l}{e^{c\,a_l}}, \nonumber
\end{align*}
where $c = \prod_{k=1}^{\alpha}\left(k - \frac{1}{100}\right) > 0$. Since $\frac{a_l}{e^{c\,a_l}} \to 0$, and $p'(\xi_0(a_l)) \to p'(0)$ is finite and nonzero (as $0$ is not a root of $p'$), we get $a_l p'(\xi_0(a_l)) e^{a_lp(\xi_0(a_l))} \to 0$. But this contradicts 
\[ a_l p'(\xi_0(a_l)) e^{a_l p(\xi_0(a_l))} = 1.
\] Hence $M_0$ exists.

Thus as long as $a \ge \max\{100, M_0, M_1, \ldots, M_\alpha\}$, we'll have that $\xi_0, \ldots, \xi_{\alpha}$ are distinct nondegenerate zeros. Also, since $F$ is a Pfaffian function of format $(\alpha,1,1)$, \khs{} theorem implies that the number of isolated real zeros of $F$ is at most $\alpha+1$. Thus $F$ has no zeros other than $\xi_0, \ldots, \xi_{\alpha}$.

We shall now prove properties \ref{lem:interval-disjoint} - \ref{lem:covering}. Before we do so, we note that, because the roots of $p$ and $p'$ are simple and independent of $a$, we can choose $\eta > 0$ such that
\begin{equation}
\label{eqn:eta-definition}
\eta < \min_{\substack{x, x' \in \left(\text{Roots}(p) \,\cup\, \text{Roots}(p')\right) \\ x \neq x'}} |x - x'|.
\end{equation}

As noted above, for all $i\in[\alpha]$ we have $\xi_i(a)\to i$ as $a\to\infty$. Hence there exists $R>0$ such that for all $a \ge R$, $|\xi_i(a)-i| < \frac{\eta}{20}$ for every $i \in [\alpha]$. Set $\eps_i = \frac{\eta}{10}$ and $I_{(i)} = [\xi_i-\eps_i,\xi_i+\eps_i]$. Since the roots of $p$ are exactly $1, \ldots, \alpha$, we have that $\eta < 1$, and since $|\xi_i(a)-i|<\frac{\eta}{20}$, we have that the intervals are pairwise disjoint. Also, since all the $\alpha + 1$ located roots of $F$ are near the integers $0$, $\ldots$, $\alpha$, this proves \ref{lem:interval-disjoint}.

For \ref{lem:no-critical}, note that the critical points of $f_1(x)=e^{ap(x)}$ are precisely the $\alpha-1$ roots of $p'$. As remarked earlier, these roots are isolated and independent of $a$. Moreover, $p$ and $p'$ share no common roots. Thus, by the definition of $\eta$, each $\xi_i$ is bounded away from every root of $p'$. Furthermore, by our choice of $\eps_i$, no root of $p'$ lies in a neighbourhood of $I_{(i)}$. Consequently, $f_1$ is a local diffeomorphism on $I_{(i)}$.

Finally, we argue \ref{lem:covering} as follows. We know that $p$ changes sign at $i$. Since we have that $|\xi_i(a) - i| < \frac{\eta}{20}$ for all $i \in [\alpha]$, and that $I_{(i)} = \left[\xi_i(a) - \frac{\eta}{10}, \xi_i(a) + \frac{\eta}{10}\right]$, this means that $p$ changes sign in $\mathrm{int}(I_{(i)})$. Consequently, given that $\left[i - \frac{\eta}{40}, i + \frac{\eta}{40}\right] \subseteq I_{(i)}$, there exists a point $x_- \in \left[i - \frac{\eta}{40}, i + \frac{\eta}{40}\right]$ such that $p(x_-) < -\delta_-$ for some constant $\delta_- > 0$ independent of $a$. By the same reasoning, we also have that there exists a point $x_+ \in \left[i - \frac{\eta}{40}, i + \frac{\eta}{40}\right]$ such that $p(x_+) > \delta_+$ for some constant $\delta_+ > 0$ independent of $a$.

Thus we have that
\[
\lim_{a \to \infty} f_1(x_-) = \lim_{a \to \infty} e^{a p(x_-)} \le \lim_{a \to \infty} e^{-a\delta_-} = 0,
\]
and since $\lim_{a \to \infty} f_1(x_-) \ge 0$ obviously, we have that
\begin{equation}
\label{eqn:f1-at-x-minus}
\lim_{a \to \infty} f_1(x_-) = 0.
\end{equation}
Similarly, we have that
\begin{equation}
\label{eqn:f1-at-x-plus}
\lim_{a \to \infty} f_1(x_+) = \lim_{a \to \infty} e^{a p(x_+)} \ge \lim_{a \to \infty} e^{a\delta_+} = \infty.
\end{equation}

By construction, there exist constants $0<m<M<\infty$ such that $\bigcup_{j=1}^{\alpha} I_{(j)} \subseteq [m,M]$. Since $f_1$ has no critical points on $I_{(i)}$ (by property \ref{lem:no-critical}), $f_1$ is strictly monotone on $I_{(i)}$. Thus the image $f_1(I_{(i)})$ is an interval whose endpoints are $f_1(\xi_i-\varepsilon_i)$ and $f_1(\xi_i+\varepsilon_i)$. Because $f_1(I_{(i)})$ contains both $f_1(x_+)$ and $f_1(x_-)$, by making $a$ sufficiently large, we can have one endpoint of $f_1(I_{(i)})$ go above $M$ (by \eqref{eqn:f1-at-x-plus}), while the other endpoint goes below $m$ (by \eqref{eqn:f1-at-x-minus}). Therefore, for all sufficiently large $a$, the interval $f_1(I_{(i)})$ contains $[m,M]$, and hence contains $\bigcup_{j=1}^{\alpha} I_{(j)}$ in its interior.

Choosing $a$ large enough to satisfy all the conditions we encountered completes the proof.
\end{proof}

\begin{remark}
    We only define intervals $I_{(1)}, \ldots, I_{(\alpha)}$ corresponding to the roots $\xi_1, \ldots, \xi_{\alpha}$, but we do not define $I_{(0)}$ corresponding to $\xi_0$. This is because while Properties \ref{lem:interval-disjoint} and \ref{lem:no-critical} would hold for $I_{(0)}$, \ref{lem:covering} would not.
\end{remark}

We are now able to prove Theorem \ref{thm:alpha}, whose statement we recall below.

\getkeytheorem{thm-alpha-power-s}

\begin{proof}[Proof overview]
    Lemma~\ref{lem:alpha} provides $\alpha$ pairwise disjoint intervals $I_{(1)},\dots,I_{(\alpha)}$, near the roots of $p$, with the crucial property that for every $i\in[\alpha]$,
\[
f_1(I_{(i)}) \supseteq \bigcup_{j=1}^{\alpha} I_{(j)}.
\]
Thus each interval contains $\alpha$ distinct inverse branches of $f_1$. Starting from the intervals $I_{(1)},\dots,I_{(\alpha)}$, we recursively pull them back along these inverse branches. Every word $\omega=(w_1,\dots,w_r)\in[\alpha]^r$ records a sequence of choices of inverse branches and determines a nested interval $I_\omega$. The covering property ensures that every interval has exactly $\alpha$ descendants, while the absence of critical points guarantees that these descendants are
well-defined and pairwise disjoint. Consequently, after $s$ levels we obtain $\alpha^s$ disjoint intervals. The intervals are illustrated in Figure \ref{fig:interval-construction}.

The final step is a fixed-point argument. For every word $\omega\in[\alpha]^s$, the interval $I_\omega$ satisfies $f_s(I_\omega)\supseteq I_\omega$, so $f_s$ has a fixed point in $I_\omega$. These fixed points are precisely the zeros of $G(x)=f_s(x)-x$. Since the intervals $I_\omega$ are pairwise disjoint, the corresponding zeros are distinct, yielding at least $\alpha^s$ real zeros.
\end{proof}

\begin{proof}[Proof of Theorem~\ref{thm:alpha}]
Let $p(x)$, $f_1(x) = e^{ap(x)}$, and $F(x) = f_1(x) - x$ be as in
Lemma~\ref{lem:alpha}, with $a$ sufficiently large. Let $\xi_0 < \cdots 
\xi_\alpha$ be the nondegenerate zeros of $F$, and let $I_{(1)}, \ldots,
I_{(\alpha)}$ be the compact intervals from the Lemma, satisfying properties
\ref{lem:interval-disjoint}--\ref{lem:covering}. For $k \geq 2$, define
$f_k : \mathbb{R} \to \mathbb{R}$ inductively by $f_k(x) = f_{k-1}(f_1(x))$,
and define $G(x) = f_s(x) - x$. Set $I_{(\emptyset)} = \mathbb{R}$.

We prove by induction on $r$ that for every $r \in [s]$ and every
$(w_1,\ldots,w_r) \in [\alpha]^r$, there exists a compact interval
$I_{(w_1,\ldots,w_r)} \subseteq I_{(w_1,\ldots,w_{r-1})}$ such that:
\begin{enumerate}[label=(\roman*)]
    \item\label{it:homeo} For every $k \in [r-1]$, the map
    \[
        f_k\restrict{I_{(w_1,\ldots,w_r)}}: I_{(w_1,\ldots,w_r)}
        \longrightarrow I_{(w_1,\ldots,w_{r-k-1},w_r)}
    \]
    is a homeomorphism, where $(w_1,\ldots,w_{r-k-1},w_r)$ denotes the word
    obtained from $(w_1,\ldots,w_r)$ by removing $w_{r-k},\ldots,w_{r-1}$.
    \item\label{it:covering} $f_r(I_{(w_1,\ldots,w_r)}) \supseteq
    \bigcup_{j=1}^{\alpha} I_{(j)}$.
    \item\label{it:disjoint} For fixed $r$, the intervals
    $\{I_{(w_1,\ldots,w_r)}\}_{(w_1,\ldots,w_r) \in [\alpha]^r}$ are
    pairwise disjoint.
    \item\label{it:nodecritical} $f_r$ has no critical points on
    $I_{(w_1,\ldots,w_r)}$.
\end{enumerate}

\begin{figure}[ht]
\centering
\begin{tikzpicture}[
    scale=1.4,
    interval/.style={very thick},
    arr/.style={->, thick, shorten >=2pt, shorten <=2pt},
    homarr/.style={->, thick, dashed, shorten >=2pt, shorten <=2pt},
    label/.style={font=\small},
]

\def\rowsep{2}         
\def\baseintwid{2}     
\def\basesep{0.55}      
\def\leveltwosep{0.04}
\def\levelthreesep{0.02}

\foreach \j in {1, 2, 3} {
    \draw[interval, ForestGreen] ({(\baseintwid+\basesep)*(\j-1)},0) -- ({(\baseintwid+\basesep)*(\j-1)+\baseintwid},0);
    \node[label, above=4pt] at ({(\baseintwid+\basesep)*(\j-1) + \baseintwid/2}, 0) {$I_{(\j)}$};
    \fill ({(\baseintwid+\basesep)*(\j-1) + \baseintwid/2}, 0) circle (1.5pt);
}
\node[label, left=8pt] at (-0.4, 0) {$r=1$};

\foreach \wone in {1, 2, 3} {
    \foreach \wtwo in {1, 2, 3} {
        \draw[interval, RoyalBlue]
            ({(\baseintwid+\basesep)*(\wone-1) + (\baseintwid/3)*(\wtwo - 1) + \leveltwosep}, -\rowsep)
            -- ({(\baseintwid+\basesep)*(\wone-1) + (\baseintwid/3)*(\wtwo - 1) + (\baseintwid/3)- \leveltwosep}, -\rowsep);
    }
}
\node[label, above=4pt] at ({(\baseintwid+\basesep)*(3-1) + \baseintwid/2}, -\rowsep) {$I_{(3,\cdot)}$};
\node[label, below=2pt] at ({(\baseintwid+\basesep)*(2-1)+0.3}, -\rowsep) {\scriptsize $I_{(2,1)}$};
\node[label, below=2pt] at ({(\baseintwid+\basesep)*(2-1)+(2*\baseintwid/3)+0.3}, -\rowsep) {\scriptsize $I_{(2,3)}$};
\node[label, above=2pt] at ({(\baseintwid+\basesep)*(1-1)+0.3}, -\rowsep) {\scriptsize $I_{(1,1)}$};
\node[label, above=2pt] at ({(\baseintwid+\basesep)*(1-1)+(2*\baseintwid/3)+0.3}, -\rowsep) {\scriptsize $I_{(1,3)}$};

\node[label, left=8pt] at (-0.4, -\rowsep) {$r=2$};

\foreach \wone in {1, 2, 3} {
    \foreach \wtwo in {1, 2, 3} {
        \foreach \wthree in {1, 2, 3} {
        \draw[interval, RubineRed]
            ({(\baseintwid+\basesep)*(\wone-1) + (\baseintwid/3)*(\wtwo - 1) + \leveltwosep + (((\baseintwid/3) - 2*\leveltwosep)/3)*(\wthree - 1) + \levelthreesep}, -2*\rowsep)
            -- ({(\baseintwid+\basesep)*(\wone-1) + (\baseintwid/3)*(\wtwo - 1) + \leveltwosep + (((\baseintwid/3) - 2*\leveltwosep)/3)*(\wthree - 1) + (((\baseintwid/3) - 2*\leveltwosep)/3) - \levelthreesep}, -2*\rowsep);
        }
    }
}
\foreach \wone in {2, 3} {   
    \node[label, below=2pt] at ({(\baseintwid+\basesep)*(\wone-1) + (\baseintwid/3)*(1-1)+0.3}, -2*\rowsep) {\tiny $I_{(\wone,1, \cdot)}$};
    \node[label, below=2pt] at ({(\baseintwid+\basesep)*(\wone-1) + (\baseintwid/3)*(2-1)+0.3}, -2*\rowsep) {\tiny $I_{(\wone,2, \cdot)}$};
    \node[label, below=2pt] at ({(\baseintwid+\basesep)*(\wone-1) + (\baseintwid/3)*(3-1)+0.3}, -2*\rowsep) {\tiny $I_{(\wone,3, \cdot)}$};
}
\node[label, below=10pt] at ({(\baseintwid+\basesep)*(1-1) + (\baseintwid/3)*(1-1)}, -2*\rowsep) {\tiny $I_{(1,1,1)}$};
\node[label, below=10pt] at ({(\baseintwid+\basesep)*(1-1) + (\baseintwid/3)*(2.5-1)+0.3}, -2*\rowsep) {\tiny $I_{(1,2,3)}$};
    
\node[label, left=8pt] at (-0.4, -2*\rowsep) {$r=3$};

\node[label, right] at ({(\baseintwid+\basesep)*3-0.15}, 0) {$I_{(w_1)}$};
\node[label, right] at ({(\baseintwid+\basesep)*3-0.15}, -\rowsep) {$I_{(w_1,w_2)} \subseteq I_{(w_1)}$};
\node[label, right] at ({(\baseintwid+\basesep)*3-0.15}, -2*\rowsep) {$I_{(w_1,w_2,w_3)} \subseteq I_{(w_1,w_2)}$};


\coordinate (TopL1) at (0,-0.04);
\coordinate (TopR1) at ({\baseintwid},-0.02);
\coordinate (BotL1) at ({(\baseintwid+\basesep)*(2-1) + (\baseintwid/3)*(1-1) + \leveltwosep},-\rowsep+0.04);
\coordinate (BotR1) at ({(\baseintwid+\basesep)*(2-1) + (\baseintwid/3)*(1-1) + (\baseintwid/3)-\leveltwosep},-\rowsep+0.04);

\fill[Yellow!30] (BotL1) -- (TopL1) -- (TopR1) -- (BotR1) -- cycle;
\draw[homarr] (BotL1) -- (TopL1);
\draw[homarr] (BotR1) -- (TopR1);

\coordinate (ConeMid1) at ($ (BotL1)!0.5!(TopR1) $);
\node at ($(ConeMid1)+(-0.5,0.3)$) {\footnotesize $f_1\restrict{I_{(2,1)}}$};
\node at ($(ConeMid1)+(-0.3,0.45)$) {\footnotesize $\cong$};

\coordinate (TopL2) at ({(\baseintwid+\basesep)*(3-1)},-0.04);
\coordinate (TopR2) at ({(\baseintwid+\basesep)*(3-1) + \baseintwid},-0.02);
\coordinate (BotL2) at ({(\baseintwid+\basesep)*(2-1) + (\baseintwid/3)*(3-1) + \leveltwosep},-\rowsep+0.04);
\coordinate (BotR2) at ({(\baseintwid+\basesep)*(2-1) + (\baseintwid/3)*(3-1) + (\baseintwid/3)-\leveltwosep},-\rowsep+0.04);

\fill[Yellow!30] (BotL2) -- (TopL2) -- (TopR2) -- (BotR2) -- cycle;
\draw[homarr] (BotL2) -- (TopL2);
\draw[homarr] (BotR2) -- (TopR2);

\coordinate (ConeMid2) at ($ (BotL2)!0.5!(TopR2) $);
\node at ($(ConeMid2)+(-0.3,0.3)$) {\footnotesize $f_1\restrict{I_{(2,3)}}$};
\node at ($(ConeMid2)+(-0.1,0.45)$) {\footnotesize $\cong$};

\coordinate (TopL3) at ({(\baseintwid+\basesep)*(1-1) + (\baseintwid/3)*(1-1)+\leveltwosep},-\rowsep-0.04);
\coordinate (TopR3) at ({(\baseintwid+\basesep)*(1-1) + (\baseintwid/3)*(2-1)-\leveltwosep},-\rowsep-0.02);
\coordinate (BotL3) at ({(\baseintwid+\basesep)*(1-1) + (\baseintwid/3)*(1-1) + \leveltwosep + (\baseintwid/9)*(1-1)},-2*\rowsep+0.04);
\coordinate (BotR3) at ({(\baseintwid+\basesep)*(1-1) + (\baseintwid/3)*(1-1) - \leveltwosep + (\baseintwid/9)*(2-1)},-2*\rowsep+0.04);

\fill[Yellow!30] (BotL3) -- (TopL3) -- (TopR3) -- (BotR3) -- cycle;
\draw[homarr] (BotL3) -- (TopL3);
\draw[homarr] (BotR3) -- (TopR3);

\coordinate (Src111) at ({(\baseintwid+\basesep)*(1-1) + (\baseintwid/3)*(1-1) + (\baseintwid/9)*0.5 + 0.1}, {-1.2*\rowsep});
\fill[black] (Src111) circle (1.6pt);

\node[black, align=left] (Lab111) at ($(Src111)+(0.5,-1.3)$) {\tiny $f_1\restrict{I_{(1,1,1)}}$};
\node[black, align=left] at ($(Src111)+(0.6,-1.17)$) {\tiny $\cong$};
\draw[->, red, thick] (Src111) to[out=-90,in=90] (Lab111.north);

\coordinate (TopL4) at ({(\baseintwid+\basesep)*(1-1) + (\baseintwid/3)*(3-1)+\leveltwosep},-\rowsep-0.04);
\coordinate (TopR4) at ({(\baseintwid+\basesep)*(1-1) + (\baseintwid/3)*(4-1)-\leveltwosep},-\rowsep-0.02);
\coordinate (BotL4) at ({(\baseintwid+\basesep)*(1-1) + (\baseintwid/3)*(2-1) + \leveltwosep + (\baseintwid/9)*(3-1)},-2*\rowsep+0.04);
\coordinate (BotR4) at ({(\baseintwid+\basesep)*(1-1) + (\baseintwid/3)*(2-1) - \leveltwosep + (\baseintwid/9)*(4-1)},-2*\rowsep+0.04);

\fill[Yellow!30] (BotL4) -- (TopL4) -- (TopR4) -- (BotR4) -- cycle;
\draw[homarr] (BotL4) -- (TopL4);
\draw[homarr] (BotR4) -- (TopR4);

\coordinate (Src123) at ({(\baseintwid+\basesep)*(1-1) + (\baseintwid/3)*(3-1) + (\baseintwid/9)*0.5 + 0.1}, {-1.2*\rowsep});
\fill[black] (Src123) circle (1.6pt);

\node[black, align=left] (Lab123) at ($(Src123)+(0.6,-0.95)$) {\tiny $f_1\restrict{I_{(1,2,3)}}$};
\node[black, align=left] at ($(Src123)+(0.7,-0.82)$) {\tiny $\cong$};
\draw[->, red, thick] (Src123) to[out=-90,in=90] (Lab123.north);

\coordinate (TopL5) at ({(\baseintwid+\basesep)*(2-1) + (\baseintwid/3)*(1-1)+\leveltwosep},-0.04);
\coordinate (TopR5) at ({(\baseintwid+\basesep)*(2-1) + (\baseintwid/3)*(4-1)-\leveltwosep},-0.02);
\coordinate (BotL5) at ({(\baseintwid+\basesep)*(2-1) + (\baseintwid/3)*(2-1) + \leveltwosep + (\baseintwid/9)*(2-1)},-2*\rowsep+0.04);
\coordinate (BotR5) at ({(\baseintwid+\basesep)*(2-1) + (\baseintwid/3)*(2-1) - \leveltwosep + (\baseintwid/9)*(3-1)},-2*\rowsep+0.04);

\fill[Yellow!30, opacity=0.4] (BotL5) -- (TopL5) -- (TopR5) -- (BotR5) -- cycle;
\draw[homarr] (BotL5) -- (TopL5);
\draw[homarr] (BotR5) -- (TopR5);

\coordinate (ConeMid5) at ($ (BotL5)!0.5!(TopR5) $);
\node at ($(ConeMid5)+(-0.5,0.4)$) {\tiny $f_2\restrict{I_{(2,2,2)}}$};
\node at ($(ConeMid5)+(-0.3,0.55)$) {\tiny $\cong$};

\coordinate (Lbl111) at ({(\baseintwid+\basesep)*(1-1) + (\baseintwid/3)*(1-1)}, {-2*\rowsep-0.35});
\coordinate (Int111) at ({(\baseintwid+\basesep)*(1-1) + (\baseintwid/3)*(1-1) + \leveltwosep + ((\baseintwid/3)-2*\leveltwosep)/6}, {-2*\rowsep});
\draw[arr] (Lbl111) to[out=90,in=-90] (Int111);

\coordinate (Lbl123) at ({(\baseintwid+\basesep)*(1-1) + (\baseintwid/3)*(2.5-1)+0.3}, {-2*\rowsep-0.35});
\coordinate (Int123) at ({(\baseintwid+\basesep)*(1-1) + (\baseintwid/3)*(2-1) + \leveltwosep + 5*((\baseintwid/3)-2*\leveltwosep)/6}, {-2*\rowsep});
\draw[arr] (Lbl123) to[out=90,in=-90] (Int123);

\end{tikzpicture}
\caption{Illustration of the nested interval construction for $\alpha = 3$ and $s = 3$. At each level $r$, the intervals $\{I_{(w_1,\ldots,w_r)}\}_{(w_1,\ldots,w_r)\in[\alpha]^r}$ are pairwise disjoint compact subintervals of their parents at level $r-1$.}
\label{fig:interval-construction}
\end{figure}

\textbf{Base case} ($r=1$). Condition~\ref{it:homeo} is vacuous.
Condition~\ref{it:covering} is Property~\ref{lem:covering} of
Lemma~\ref{lem:alpha}, condition~\ref{it:disjoint} is
Property~\ref{lem:interval-disjoint}, and condition~\ref{it:nodecritical} is
Property~\ref{lem:no-critical}.

\textbf{Inductive step.} Suppose the statement holds for all words of length
at most $r-1$, where $r \geq 2$. Fix $(w_1,\ldots,w_r) \in [\alpha]^r$. By
the inductive hypothesis, conditions~\ref{it:covering} and
\ref{it:nodecritical} applied to $(w_1,\ldots,w_{r-1})$ give
\[
    f_{r-1}(I_{(w_1,\ldots,w_{r-1})}) \supseteq \bigcup_{j=1}^{\alpha}
    I_{(j)} \supseteq I_{(w_r)},
\]
and that $f_{r-1}$ is a local diffeomorphism on $I_{(w_1,\ldots,w_{r-1})}$.
We therefore define
\[
    I_{(w_1,\ldots,w_r)} :=
    \left(f_{r-1}\restrict{I_{(w_1,\ldots,w_{r-1})}}\right)^{-1}\left(I_{(w_r)}\right),
\]
which is a compact subinterval of $I_{(w_1,\ldots,w_{r-1})}$ such that
\[
    f_{r-1}\restrict{I_{(w_1,\ldots,w_r)}}: I_{(w_1,\ldots,w_r)}
    \longrightarrow I_{(w_r)}
\]
is a homeomorphism. This verifies condition~\ref{it:homeo} for $k = r-1$.

For $k \in [r-2]$, we verify condition~\ref{it:homeo} as follows. Since $f_{r-1} = f_{r-1-k} \circ f_k$ and $f_{r-1}$ maps $I_{(w_1,\ldots,w_r)}$ homeomorphically onto $I_{(w_r)}$, the map $f_k\restrict{I_{(w_1,\ldots,w_r)}}$ is a homeomorphism onto its image $C := f_k(I_{(w_1,\ldots,w_r)})$, and $f_{r-1-k}$ maps $C$ homeomorphically onto $I_{(w_r)}$. By the inductive hypothesis applied to $(w_1,\ldots,w_{r-1})$, condition~\ref{it:homeo} with index $k$ gives that $f_k$ maps $I_{(w_1,\ldots,w_{r-1})}$ homeomorphically onto $I_{(w_1,\ldots,w_{r-k-2},w_{r-1})}$. Since $I_{(w_1,\ldots,w_r)} \subseteq I_{(w_1,\ldots,w_{r-1})}$, we have $C = f_k(I_{(w_1,\ldots,w_r)}) \subseteq f_k(I_{(w_1,\ldots,w_{r-1})}) = I_{(w_1,\ldots,w_{r-k-2},w_{r-1})}$. By the inductive hypothesis applied to the word $(w_1,\ldots,w_{r-k-1},w_r)$ of length $r-k$, condition~\ref{it:homeo} with index $r-1-k$ gives that $f_{r-1-k}$ maps $I_{(w_1,\ldots,w_{r-k-1},w_r)}$ homeomorphically onto $I_{(w_r)}$, and $I_{(w_1,\ldots,w_{r-k-1},w_r)} \subseteq I_{(w_1,\ldots,w_{r-k-2},w_{r-1})}$ by condition~\ref{it:homeo} at the same level. Since both $C$ and $I_{(w_1,\ldots,w_{r-k-1},w_r)}$ are compact subintervals of $I_{(w_1,\ldots,w_{r-k-2},w_{r-1})}$ on which $f_{r-1-k}$ restricts to a homeomorphism onto $I_{(w_r)}$, and $f_{r-1-k}$ is a local diffeomorphism on $I_{(w_1,\ldots,w_{r-k-2},w_{r-1})}$ by condition~\ref{it:nodecritical}, we conclude $C = I_{(w_1,\ldots,w_{r-k-1},w_r)}$. Hence
\[
    f_k\restrict{I_{(w_1,\ldots,w_r)}}: I_{(w_1,\ldots,w_r)} \longrightarrow I_{(w_1,\ldots,w_{r-k-1},w_r)}
\]
is a homeomorphism, verifying condition~\ref{it:homeo} for all $k \in [r-1]$.

See Figure \ref{fig:interval-construction} for a pictorial representation of the various intervals and what the maps are doing.

For condition~\ref{it:covering},
\[
    f_r(I_{(w_1,\ldots,w_r)}) = f_1(f_{r-1}(I_{(w_1,\ldots,w_r)}))
    = f_1(I_{(w_r)}) \supseteq \bigcup_{j=1}^{\alpha} I_{(j)},
\]
where the last inclusion is Property~\ref{lem:covering}.

For condition~\ref{it:nodecritical}, we have to show that $f_r$ has no critical points on $I_{(w_1,\ldots,w_r)}$. Since $f_r = f_1 \circ f_{r-1}$, we have that $x \in I_{(w_1, \ldots, w_r)}$ is a critical point of $f_r$ only if
\[
0 = f_r'(x) = (f_1' \circ f_{r-1})(x) \cdot f_{r-1}'(x).
\]
By condition \ref{it:nodecritical} of the inductive hypothesis at level $r-1$, we have that $f_{r-1}$ has no critical points on $I_{(w_1, \ldots, w_{r-1})}$, thus has no critical points on $I_{(w_1, \ldots, w_r)}$. In other words $f_{r-1}'(x)$ can never be $0$ on $I_{(w_1, \ldots, w_r)}$. Also, since the image of $f_{r-1}$ on $I_{(w_1, \ldots, w_r)}$ is $I_{(w_r)}$, and $f_1$ has no critical points on $I_{(w_r)}$ by condition~\ref{it:nodecritical} at level $r=1$, $(f_1' \circ f_{r-1})(x)$ can never be $0$. Thus $f_r$ has no critical points on $I_{(w_1,\ldots,w_r)}$.

For condition~\ref{it:disjoint}, let $(w_1,\ldots,w_r)$ and
$(w'_1,\ldots,w'_r)$ be distinct words in $[\alpha]^r$. If they differ at
some index $k < r$, then $(w_1,\ldots,w_{r-1}) \neq (w'_1,\ldots,w'_{r-1})$,
and since $I_{(w_1,\ldots,w_r)} \subseteq I_{(w_1,\ldots,w_{r-1})}$ and
$I_{(w'_1,\ldots,w'_r)} \subseteq I_{(w'_1,\ldots,w'_{r-1})}$, disjointness
follows from the inductive hypothesis. If they agree on the first $r-1$
letters but $w_r \neq w'_r$, then both $I_{(w_1,\ldots,w_r)}$ and
$I_{(w'_1,\ldots,w'_r)}$ are subintervals of $I_{(w_1,\ldots,w_{r-1})}$ on
which $f_{r-1}$ restricts to homeomorphisms onto $I_{(w_r)}$ and
$I_{(w'_r)}$ respectively. Since $w_r \neq w'_r$, the intervals $I_{(w_r)}$
and $I_{(w'_r)}$ are disjoint by condition~\ref{it:disjoint} at level $r=1$,
and hence $I_{(w_1,\ldots,w_r)}$ and $I_{(w'_1,\ldots,w'_r)}$ are disjoint.

\textbf{Existence of zeros.} For each $\omega = (w_1,\ldots,w_s) \in
[\alpha]^s$, write $I_\omega = [a,b]$. By condition~\ref{it:covering},
$f_s(I_\omega) \supseteq \bigcup_{j=1}^\alpha I_{(j)} \supseteq
I_\omega$, so there exist $x_1, x_2 \in [a,b]$ with $f_s(x_1) = a$ and
$f_s(x_2) = b$. Then
\[
    G(x_1) = a - x_1 \leq 0 \qquad\text{and}\qquad G(x_2) = b - x_2 \geq 0,
\]
so by the Intermediate Value Theorem there exists $\zeta_\omega \in
I_\omega$ with $G(\zeta_\omega) = 0$. Also, by condition \ref{it:disjoint}, we know that the intervals $\{I_\omega\}_{\omega \in [\alpha]^s}$ are pairwise disjoint, thus $G$ has at least $\alpha^s$ roots.

\textbf{Nondegeneracy.} Fix $\omega=(w_1,\ldots,w_s)\in[\alpha]^s$, and let
$\zeta_\omega\in I_\omega$ be a zero of $G$, so that $f_s(\zeta_\omega)=\zeta_\omega$. Also, letting $f_0 = \mathrm{id}$, define for every $k\in\{0,\ldots,s-1\}$, $x_k:=f_k(\zeta_\omega)$. By Condition \ref{it:homeo}, $x_k \in I_{(w_1)}$. Also, since $f_s(\zeta_\omega)=\zeta_\omega$, we also have $f_1(x_{s-1})=\zeta_\omega\in I_{(w_1)}$.

For each $i\in[\alpha]$, the interval $I_{(i)}$ is compact and contains no critical point of $f_1$. Since $f_1'(x)=a\,p'(x)e^{ap(x)}$, it follows that $p'$ has no zero on $I_{(i)}$. Hence
\[
\delta := \min_{\substack{x\in I_{(i)} \\ i \in [\alpha]}} |p'(x)|>0.
\]
Also, because $I_{(i)}$ are compact subsets of $(0,\infty)$, there exists
\[
m := \min_{\substack{x\in I_{(i)} \\ i \in [\alpha]}} x > 0.
\]

Now, for $0\le k\le s-2$,
\[
e^{ap(x_k)} = f_1(x_k) =x_{k+1}\in I_{(w_{1})},
\]
and therefore $e^{ap(x_k)}\ge m$. Likewise,
\[
e^{ap(x_{s-1})}
   =f_1(x_{s-1})
   =\zeta_\omega\in I_{(w_1)},
\]
so again $e^{ap(x_{s-1})}\ge m$.

Using the chain rule,
\[
f_s'(\zeta_\omega)
   =\prod_{k=0}^{s-1}
      a\,p'(x_k)e^{ap(x_k)}.
\]
Consequently,
\[
|f_s'(\zeta_\omega)|
   \ge
   \prod_{k=0}^{s-1}
      a\,\delta\,m
   =(a\delta m)^s.
\]
Since $\delta m>0$, we observe that by increasing $a$ if necessary, we can ensure that $|f_s'(\zeta_\omega)|>1$. Therefore
\[
G'(\zeta_\omega)
   =f_s'(\zeta_\omega)-1
   \neq 0,
\]
and $\zeta_\omega$ is a nondegenerate zero of $G$. The argument above independent of $\omega$, so we conclude that all zeros of $G$ can be made nondegenerate by increasing $a$.

\textbf{Format.} It is easy to see that $\vec{q} = (f_1, \ldots, f_s)$ is a Pfaffian chain of chain-degree $\alpha$ and order $s$. Hence $G(x) = f_s(x) - x$ is a Pfaffian function defined with respect to $\vec{q}$, with format $(\alpha,1,s)$, and with at least $\alpha^s$ nondegenerate real zeros, as required.

Once again, choosing $a$ large enough concludes the proof.
\end{proof}

\subsection{Proof of Theorem \ref{thm:beta-growth}}
\label{sec:proof-beta}

In this section, we shall prove Theorem \ref{thm:beta-growth}. 

We begin by establishing the following proposition which shows that every finite-dimensional space of real-analytic functions contains enough degrees of freedom to realise arbitrary first-order jet data at a suitable collection of distinct points.

\begin{proposition}
\label{prop:surjective}
Let $I \subseteq \mathbb{R}$ be a non-empty open interval, and let
$W \subseteq C^\omega(I)$ be a finite-dimensional real vector space of
real-analytic functions. For every integer $m$ satisfying $2m \leq \dim_{\mathbb{R}} W$, there exist distinct points $t_1,\ldots,t_m \in I$ such that the first jet evaluation map
\[
J_{(t_1,\ldots,t_m)} : W \longrightarrow \mathbb{R}^{2m}, \qquad \text{which takes} \qquad f \mapsto \left(f(t_1),f'(t_1),\ldots,f(t_m),f'(t_m)\right)
\]
is surjective.
\end{proposition}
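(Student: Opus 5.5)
We will argue by induction on $m$, choosing the points one at a time so that the rank of the jet evaluation map grows by exactly $2$ at each step. Set $d = \dim_{\mathbb{R}} W$. The case $m=0$ is vacuous. Suppose distinct $t_1,\ldots,t_{m-1}\in I$ have been chosen with $J_{(t_1,\ldots,t_{m-1})}$ surjective onto $\mathbb{R}^{2m-2}$, where $2m\le d$. Let
\[
K := \ker J_{(t_1,\ldots,t_{m-1})} \subseteq W,
\]
so $\dim K = d - (2m-2) \ge 2$. Surjectivity of $J_{(t_1,\ldots,t_m)}$ is equivalent to surjectivity of the restricted map $K \to \mathbb{R}^2$, $f \mapsto (f(t_m), f'(t_m))$. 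Indeed, given a target vector, first hit the first $2m-2$ coordinates using some $g\in W$. Then correct the last two coordinates by adding an element of $K$, which does not disturb the first $2m-2$ coordinates.

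So the heart of the matter is a one-point statement. If $K\subseteq C^\omega(I)$ has dimension at least $2$, then for all but finitely many points in any compact subinterval, and in particular for some $t\in I\setminus\{t_1,\ldots,t_{m-1}\}$, the map $f\mapsto j^1_t f$ from $K$ to $\mathbb{R}^2$ is surjective.

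To prove this, pick two linearly independent elements $f,g\in K$. By Proposition~\ref{prop:wronskian}, $W(f,g)\not\equiv 0$ on $I$. By Theorem~\ref{thm:identity}, the zero set of the real-analytic function $W(f,g)$ has empty interior, so it cannot contain the open set $I\setminus\{t_1,\ldots,t_{m-1}\}$. Choose $t_m$ in this set with $W(f,g)(t_m)\neq 0$. The determinant condition says exactly that the vectors $j^1_{t_m}f$ and $j^1_{t_m}g$ are linearly independent in $\mathbb{R}^2$. Hence the restricted map from $K$ is surjective, and $t_m$ is distinct from the earlier points. This closes the induction.

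The only real obstacle is guaranteeing that the new point avoids the previously chosen points while still having nonvanishing Wronskian. This is handled by the identity theorem: a nonzero analytic function cannot vanish on the nonempty open set obtained by deleting finitely many points from $I$.

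Everything else is linear algebra, namely the rank count $\dim K\ge 2$ and the observation that surjectivity splits across the kernel. We will also note in passing that the paper defined $J$ on a space called $W$, which coincides with the $W$ here, and that the Wronskian notation $W(f,g)$ should not be confused with the space $W$.
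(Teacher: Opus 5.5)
Your proposal is correct and follows the paper's proof essentially step for step: the same induction on $m$, the same kernel $K$ with $\dim K\ge 2$, the same splitting of surjectivity across $K$, and the same Wronskian plus identity-theorem argument to find $t_m$. The only difference is that you argue directly (choosing $t_m$ with $W(f,g)(t_m)\neq 0$) where the paper argues by contradiction; your aside about ``all but finitely many points in any compact subinterval'' would need isolatedness of analytic zeros rather than the identity theorem as stated, but it is never used.
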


\begin{proof}
We prove the statement by induction on $m$.

If $m=0$, there is nothing to prove. Suppose now that $m\geq 1$, and assume
that the statement has already been proved with $m-1$ in place of $m$.

Since $2(m-1)\leq \dim_{\mathbb R}W$, there exist distinct points $t_1,\ldots,t_{m-1} \in I$ such that the map
\[
J_{(t_1,\ldots,t_{m-1})}:W\longrightarrow\mathbb{R}^{2(m-1)}, \qquad \text{which takes} \qquad f \mapsto \left(f(t_1),f'(t_1),\ldots,f(t_{m-1}),f'(t_{m-1})\right),
\]
is surjective.

Define $K=\ker J_{(t_1,\ldots,t_{m-1})}$. Since $J_{(t_1,\ldots,t_{m-1})}$ is surjective, we have $\dim_{\mathbb{R}}K = \dim_{\mathbb R}W-2(m-1)$. Moreover, since $2m\leq \dim_{\mathbb R}W$, we have $\dim_{\mathbb{R}}K\geq 2$. We claim that there exists a point $t_m\in I\setminus\{t_1,\ldots,t_{m-1}\}$ such that the map
\[
E_{t_m}:K\longrightarrow\mathbb{R}^2, \qquad \text{which takes} \qquad f \mapsto \left(f(t_m),f'(t_m)\right),
\]
is surjective.

Suppose, for contradiction, that no such point exists. Then, for every $t\in I\setminus\{t_1,\ldots,t_{m-1}\}$, the image of $E_t$ has dimension at most $1$. Choose linearly independent functions $g,h\in K$. For every $t$, the vectors $\left(g(t),g'(t)\right)$ and $\left(h(t),h'(t)\right)$ are linearly dependent. Obviously, $g$ and $h$ are real-analytic. Thus the Wronskian 
\[ 
W(g,h):=gh'-g'h, 
\] 
vanishes on the non-empty open set $I\setminus\{t_1,\ldots,t_{m-1}\}$. By the identity theorem (Theorem \ref{thm:identity}), $W(g,h)$ vanishes identically on I. Proposition \ref{prop:wronskian} therefore implies that $g$ and $h$ are linearly dependent, contradicting our choice of $g$ and $h$. Therefore there exists $t_m\in I\setminus\{t_1,\ldots,t_{m-1}\}$ such that $E_{t_m}:K\longrightarrow\mathbb{R}^2$ is surjective.

Finally, we show that
\[
J_{(t_1,\ldots,t_{m})}:W\longrightarrow\mathbb{R}^{2m}, \qquad \text{which takes} \qquad f \mapsto \left(J_{(t_1,\ldots,t_{m-1})}(f),f(t_m),f'(t_m)\right),
\]
is surjective.

Take an arbitrary $(u,v)\in\mathbb{R}^{2(m-1)}\times\mathbb{R}^2$. Since $J_{(t_1,\ldots,t_{m-1})}$ is surjective, there exists $f_0\in W$ such that $J_{(t_1,\ldots,t_{m-1})}(f_0)=u$. Also, since $E_{t_m}$ is surjective, there exists $k\in K$ such that $E_{t_m}(k)=v-E_{t_m}(f_0)$. Then, since $k\in K$, we have
\[
J_{(t_1,\ldots,t_{m-1})}(f_0+k)=J_{(t_1,\ldots,t_{m-1})}(f_0)=u,
\]
and since $E_{t_m}(f_0+k)=v$, we have that
\[
J_{(t_1,\ldots,t_{m})}(f_0+k)=(u,v).
\]
Thus $J_{(t_1,\ldots,t_{m})}$ is surjective. This completes the induction.
\end{proof}

We next establish the following lemma, which proves the algebraic independence of a family of Pfaffian chains.

\begin{lemma}
    \label{lem:algebraic-independence}
    Let $q_0:\mathbb{R}^n\to\mathbb{R}$ be any function with the following property: for every $(x_1,\ldots,x_n)\in\mathbb{R}^n$, there exists an index $k \in [n]$ such that 
    \[
    \lim_{t\to\infty} q_0(x_1,\ldots,x_{k-1},t,x_{k+1},\ldots,x_n) = \infty.
    \] For all $i \in [s]$, define $q_i(x_1, \ldots, x_n)=e^{q_{i-1}(x_1, \ldots, x_n)}$. We have that $\vec{q} = (q_1, \ldots, q_s)$ is a Pfaffian chain that is algebraically independent over $\R(X_1, \ldots, X_n)$.
\end{lemma}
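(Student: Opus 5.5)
The plan is to split the statement into two parts: the chain property, which is a direct computation, and algebraic independence, which I would reduce to a one-variable growth argument.

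\textbf{Pfaffian chain.} For the chain property I would compute $dq_i = q_1 q_2 \cdots q_i \, dq_0$, exactly as in Example~\ref{example:pfaffian}\ref{example:iterated-exponential}. This is a valid Pfaffian chain provided $dq_0$ has polynomial coefficients, that is, provided $q_0$ is a polynomial; this is the case in the intended applications, for instance $q_0 = x_1 + \cdots + x_n$. I would state this assumption explicitly. It is not cosmetic: for a general $q_0$ the lemma is false. With $n=1$ and $q_0 = \log t$ on $t>0$, we get $q_1 = t$, so $\vec{q}$ is algebraically dependent. The polynomiality of $q_0$ will also be used essentially in the growth step below.

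\textbf{Reduction to one variable.} Suppose, for contradiction, that $P \neq 0$ and $P(\mathbf{x}, q_1(\mathbf{x}), \ldots, q_s(\mathbf{x})) \equiv 0$.
\begin{enumerate}[label=(\alph*)]
\item View $P$ as a polynomial in $Y$ with coefficients in $\R[X]$. Some coefficient is a nonzero polynomial in $X$, so I can choose a point $\mathbf{x}$ at which $P(\mathbf{x}, Y) \not\equiv 0$ as a polynomial in $Y$.
\item Let $k$ be the index given by the hypothesis at this $\mathbf{x}$. Define $Q(t, Y)$ by substituting $t$ for $x_k$ and keeping the other coordinates of $\mathbf{x}$ fixed.
\item Then $Q \neq 0$, because specialising further at $t = x_k$ gives $P(\mathbf{x}, Y) \neq 0$.
\item Set $u(t) = q_0(x_1, \ldots, t, \ldots, x_n)$ and $y_i = \exp^{\circ i}(u)$. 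We have $Q(t, y_1(t), \ldots, y_s(t)) \equiv 0$, while $u(t) \to \infty$. Since $u$ is a polynomial in $t$, it has degree $d \ge 1$ and $u(t) \ge c\,t$ for large $t$ and some $c > 0$.
\end{enumerate}

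\textbf{Dominance (the main step).} I would order monomials $t^{b} y_1^{a_1} \cdots y_s^{a_s}$ reverse-lexicographically: compare $a_s$ first, then $a_{s-1}$, and so on down to $a_1$, and finally $b$. The claim is that along $t \to \infty$ the leading monomial of $Q$ strictly dominates every other monomial of $Q$. This forces $Q(t, y(t)) \neq 0$ for large $t$, which is the contradiction.

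The claim reduces to the following dominance fact: for each $i \ge 1$ and every monomial $M$ in $t, y_1, \ldots, y_{i-1}$,
\[
\frac{M}{y_i} \to 0.
\]
For $i = 1$ this holds because $y_1 = e^{u} \ge e^{ct}$ beats every power of $t$. For $i \ge 2$, we have $\log y_i = y_{i-1}$, while $\log M$ is $O(\log t + y_1 + \cdots + y_{i-2})$. By induction this is $o(y_{i-1})$, so the fact holds. Given the fact, suppose two monomials first differ at $y_j$, with exponents differing by at least $1$. The ratio of the smaller monomial to the larger is then at most $y_j^{-1}$ times a monomial in lower variables, and by the fact this tends to $0$. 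The terms involving $y_{j+1}, \ldots, y_s$ agree in the two monomials and cancel in the ratio.

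I expect this dominance estimate to be the main obstacle. It is precisely where the polynomial growth of $q_0$ must be used, since it is needed to beat the powers of $t$ at the bottom level.
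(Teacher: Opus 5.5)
Your argument is correct and is essentially the paper's proof. The paper uses the same order $\prec$ on the $Y$-exponents, restricts to a coordinate line $\gamma(t)$ along which $q_0\to\infty$, and shows that the leading term dominates; it differs only in presentation, keeping the base point $\mathbf{c}$ arbitrary and using that a polynomial in $t$ with a finite limit is constant, where you pick one good point and fold the $t$-degree into the monomial order. You are also right that the statement needs a growth (e.g.\ polynomiality) hypothesis on $q_0$, which the paper's proof tacitly uses when it asserts $-\tfrac12 q_0(\gamma(t))+(d+\eps)\ln t\to-\infty$; your $\log t$ example is not defined on all of $\R$, but $q_0(x)=\log(1+x^2)$ is, satisfies the limit hypothesis, and gives $q_1=1+x^2$, so it is a valid counterexample to the lemma as stated.
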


\begin{proof}
    That $\vec{q}$ is a Pfaffian chain is easily verified (very similar to Example \ref{example:pfaffian}-\ref{example:iterated-exponential}). It remains to show that $\vec{q}$ is algebraically independent over $\R(X_1, \ldots, X_n)$.

    In what follows, we will use $\vecX$ to denote $X_1, \ldots, X_n$.

    Suppose, by way of contradiction, that there exists a nonzero $P \in \R[\vecX, Y_1, \dots, Y_s]$ of degree at most $d$ satisfying
    \begin{equation}
        \label{eqn:contra-poly-1}
        P(\vecX, q_1(\vecX), \ldots, q_s(\vecX)) = 0.
    \end{equation}
    for all $\vecX \in \R^n$. 

    We can write
    \begin{equation}
        \label{eqn:contra-poly-2}
        P(\vecX, Y_1, \dots, Y_s) = \sum_{\substack{\mu = (\mu_1, \ldots, \mu_s) \in \Z_{\ge 0}^s \\ \sum_{i=1}^s \mu_i \le d}} P_\mu(\vecX) Y^\mu, 
    \end{equation}
    where the $P_\mu \in \R[X_1, \ldots, X_n]$, and $Y^\mu$ is a short-hand notation for the monomial $Y_1^{\mu_1} \dots Y_s^{\mu_s}$. 

    Defining
    \[
      S_\mu(\vecX) := \sum_{i=1}^s \mu_i\, q_{i-1}(\vecX),
    \]
    the relations in \eqref{eqn:contra-poly-1} and \eqref{eqn:contra-poly-2} become
    \begin{equation}
        \label{eqn:contra-poly-rewrite}
        \sum_{\mu} P_{\mu}(\vecX) e^{S_\mu(\vecX)} = 0.
    \end{equation}

    Define an order $\prec$ on $\mathbb{Z}_{\geq 0}^s$ as follows: for $\mu, \nu \in \Z^s_{\ge 0}$, we say $\mu \prec \nu$ if $\mu_i < \nu_i$ at the largest index $i$ with $\mu_i \neq \nu_i$.  Let $\nu$ be the $\prec$-maximum multi-index for which $P_{\nu}$ is not identically zero. Dividing \eqref{eqn:contra-poly-rewrite} by $e^{S_{\nu}(\vecX)}$ and re-arranging gives
    \begin{equation}
    \label{eq:divided}
    P_{\nu}(\mathbf{x}) = -\sum_{\mu \neq \nu} P_{\mu}(\mathbf{x}) e^{S_\mu(\mathbf{x}) - S_{\nu}(\mathbf{x})}.
    \end{equation}

    Fix $\mathbf{c}=(c_1,\ldots,c_n)\in\mathbb{R}^n$.
    By assumption, there exists
    $k\in\{1,\ldots,n\}$ such that
    \[
    \lim_{t\to\infty}
    q_0(c_1,\ldots,c_{k-1},t,c_{k+1},\ldots,c_n)=+\infty.
    \]
    Define the path
    \[
    \gamma(t)
    =
    (c_1,\ldots,c_{k-1},t,c_{k+1},\ldots,c_n).
    \]
    Take any $\mu \prec \nu$ with $P_\mu \not\equiv 0$, and let $i \le s$ be the largest index with $\mu_i \neq \nu_i$. The maximality of $\nu$ forces $\nu_i > \mu_i$, and by definition of $i$ we have $\mu_j = \nu_j$ for all $j > i$. Hence
    \[
      S_{\mu}\left(\gamma(t)\right) - S_{\nu}\left(\gamma(t)\right)
      =
      -(\nu_i-\mu_i) q_{i-1}(\gamma(t))
      +
      \sum_{j=1}^{i-1}(\mu_j-\nu_j)q_{j-1}(\gamma(t)).
    \]

    Since $q_0(\gamma(t))\to+\infty$, we have
    \[
    \lim_{t\to\infty} \frac{q_{j-1}(\gamma(t))} {q_{i-1}(\gamma(t))} = 0 \qquad\text{for every }j<i,
    \]
    because each $q_r$ is obtained from $q_{r-1}$ by exponentiation. Consequently, since the coefficients $|\mu_j-\nu_j|$ are bounded by $d$, there exists $t_0$ such that for all $t\ge t_0$,
    \begin{equation}\label{eq:growth}
      S_\mu(\gamma(t))-S_\nu(\gamma(t)) \le -\frac{1}{2} q_{i-1}(\gamma(t)).
    \end{equation}
    
    Since $P$ is of degree at most $d$, $P_\mu(\gamma(t))$ is a polynomial in $t$ of degree at most $d$. Thus for any $\eps > 0$, we have that 
    \begin{equation}
    \label{eqn:poly-growth}
    \lim_{t \to \infty} \frac{\left|P_\mu(\gamma(t))\right|}{t^{d + \eps}} = 0.
    \end{equation}
    Also,
    \begin{equation}
    \label{eqn:exponent-of-final}
        \lim_{t \to \infty} \left(-\frac12 q_{i-1}(\gamma(t))+(d+\eps)\ln t\right) = -\infty.
    \end{equation}    
    Thus we deduce
    \begin{align}
        &\lim_{t \to \infty} \left|P_\mu(\gamma(t))\right|
          e^{S_\mu(\gamma(t)) - S_{\nu}(\gamma(t))} \nonumber \\
        &\qquad =
        \lim_{t \to \infty}
        \frac{\left|P_\mu(\gamma(t))\right|}{t^{d+\eps}}
        e^{S_\mu(\gamma(t))-S_\nu(\gamma(t))+(d+\eps)\ln t} \nonumber \\
        &\qquad \le
        \lim_{t \to \infty}
        \frac{\left|P_\mu(\gamma(t))\right|}{t^{d+\eps}}
        e^{-\frac12 q_{i-1}(\gamma(t))+(d+\eps)\ln t}
        \eqcomment{by \eqref{eq:growth}} \nonumber \\
        &\qquad = 0 \eqcomment{by \eqref{eqn:poly-growth} and \eqref{eqn:exponent-of-final}}.
    \end{align}
    and since $\lim_{t \to \infty} \left|P_\mu(\gamma(t))\right|\cdot e^{S_\mu(\gamma(t)) - S_{\nu}(\gamma(t))} \ge 0$ obviously,
    we have that
    \begin{equation}
    \label{eqn:limit-of-summands-of-p-nu}
    \lim_{t \to \infty} \left|P_\mu(\gamma(t))\right|\cdot e^{S_\mu(\gamma(t)) - S_{\nu}(\gamma(t))} = 0.
    \end{equation}
    
    Since $\nu$ was chosen $\prec$-maximal among the indices with $P_\nu\not\equiv0$, every index $\mu\neq\nu$ appearing on the right-hand side of \eqref{eq:divided} with $P_\mu\not\equiv0$ satisfies $\mu\prec\nu$. Therefore the preceding estimate applies to every nonzero summand. 

    Evaluating \eqref{eq:divided} along $\gamma(t)$ and applying \eqref{eqn:limit-of-summands-of-p-nu} to each of the finitely many nonzero summands gives
    \[
    \lim_{t\to\infty} P_\nu(\gamma(t)) = 0.
    \]
    A polynomial with a finite limit as $t\to+\infty$ must be constant, and since the limit is $0$, it follows that $P_\nu(\gamma(t))$ is identically zero. In particular,
    \[
    P_\nu(\mathbf{c})=0.
    \]
    As $\mathbf{c}\in\mathbb{R}^n$ was arbitrary, we conclude that $P_\nu$ vanishes on all of $\mathbb{R}^n$, and hence $P_\nu\equiv0$, contradicting the choice of $\nu$.    
\end{proof}

We shall now prove Theorem \ref{thm:beta-growth} (statement recalled below for convenience) by considering a suitable finite-dimensional $\R$-vector space of Pfaffian functions and showing that it contains a function with many prescribed simple zeros.

\getkeytheorem{thm-beta-growth}

\begin{proof}[Proof of Theorem \ref{thm:beta-growth}]
Define $Q_0(T)=T$, and $Q_i(T)=e^{Q_{i-1}(T)}$ for $i \in [s]$. By Lemma \ref{lem:algebraic-independence}, we have that the functions $Q_1, \ldots, Q_s$ are algebraically independent over $\mathbb{R}(T)$.

Define
\[
W_\beta
=
\operatorname{span}_{\mathbb{R}}
\left\{
T^a Q_1(T)^{j_1}\cdots Q_s(T)^{j_s}
:
a+j_1+\cdots+j_s\leq \beta
\right\}.
\]
By algebraic independence, the functions appearing in the spanning set above
are linearly independent. Thus,
\[
\dim_{\mathbb{R}} W_\beta = \binom{\beta+s+1}{s+1}.
\]
Define
\[
m_\beta = \left\lfloor \frac{\dim_{\mathbb{R}} W_\beta}{2}\right\rfloor.
\]
By Proposition \ref{prop:surjective}, there exist distinct $t_1,\ldots,t_{m_\beta} \in \R$ such that the first jet evaluation map
\[
J_{t_1,\ldots,t_{m_\beta}}:W_\beta\longrightarrow \mathbb{R}^{2m_\beta}, \qquad \text{which takes} \qquad h \mapsto \left(h(t_1),h'(t_1),\ldots,h(t_{m_\beta}),h'(t_{m_\beta})\right)
\]
is surjective. Hence there exists $h\in W_\beta$ such that, for every $j \in [m_{\beta}]$,
\[
h(t_j)=0 \qquad \text{and} \qquad h'(t_j)=1.
\]
In particular, $h$ has at least $m_\beta$ distinct simple real zeros.

For distinct $a_1,\ldots,a_\beta \in \R$ define $P(T)=\prod_{r=1}^{\beta}(T-a_r)$. Now define functions $f_1,\ldots,f_n:\mathbb{R}^n\to\mathbb{R}$ by
\[
f_1(x_1,\ldots,x_n)=h(x_1+\cdots+x_n),
\]
and, for $i=2,\ldots,n$, by
\[
f_i(x_1,\ldots,x_n)=P(x_i).
\]
It is easy to verify that $\vec{q} = (q_1,\ldots,q_s)$, where
\[
q_0(x_1, \ldots, x_n) = x_1 + \ldots + x_n, \qquad \text{and} \qquad q_i(x)=e^{q_{i-1}(x)} \qquad \text{for all $i \in [s]$},
\]
is a Pfaffian chain of chain-degree $s$. Therefore, we get that each $f_i$ above is a Pfaffian function of format $(s,\beta,s)$ with respect to $\vec{q}$. Indeed, for $f_1$, this follows because $h\in W_\beta$, and replacing $T$
by $x_1+\cdots+x_n$ preserves total degree at most $\beta$. For $i\geq 2$,
the function $f_i$ is a polynomial of degree $\beta$.

For every $j \in [m_{\beta}]$ and every choice of indices $r_2,\ldots,r_n\in\{1,\ldots,\beta\}$, define a point $y = (y_1, \ldots, y_n) \in \mathbb{R}^n$ as $y_1=t_j-\sum_{i=2}^n a_{r_i}$ and for $i=2,\ldots,n$, $y_i=a_{r_i}$. Then $y_1+\cdots+y_n=t_j$. Therefore $f_1(y)=h(t_j)=0$, and, for $i=2,\ldots,n$, $f_i(y)=P(a_{r_i})=0$. Thus $y$ is a common zero of $f_1,\ldots,f_n$. 

We have $m_{\beta}$ different choices of $t_j$'s, and $\beta^{n-1}$ choices of $r_2, \ldots, r_n$. For each zero, we have a unique common zero of $f_1,\ldots,f_n$, and thus we have that the number of common zeros is \[
m_\beta \beta^{n-1}
=
\left\lfloor
\frac{1}{2}
\binom{\beta+s+1}{s+1}
\right\rfloor
\beta^{n-1}.
\]
When $n$ and $s$ are fixed, we have that the number of common zeros is
\[
\Omega_{n,s}\left(\beta^{s+1}\beta^{n-1}\right)
=
\Omega_{n,s}\left(\beta^{n+s}\right).
\]

We now show that these common zeros are regular. Indeed, at such a point $y$, the Jacobian matrix of $F=(f_1,\ldots,f_n)$ is
\[
\nabla F(y)
=
\left(
\begin{array}{ccccc}
h'(t_j) & h'(t_j) & h'(t_j) & \cdots & h'(t_j) \\
0 & P'(a_{r_2}) & 0 & \cdots & 0 \\
0 & 0 & P'(a_{r_3}) & \cdots & 0 \\
\vdots & \vdots & \vdots & \ddots & \vdots \\
0 & 0 & 0 & \cdots & P'(a_{r_n})
\end{array}
\right).
\]
Expanding along the first column gives
\[
\det \nabla F(y) = h'(t_j)\prod_{i=2}^n P'(a_{r_i}).
\]
Since $h'(t_j)=1$ and since the roots $a_1,\ldots,a_\beta$ of $P$ are simple, we have $\det \nabla (y)\neq 0$. Thus every such common zero is regular.  This completes the proof.
\end{proof}

\subsection{Sharpness in \texorpdfstring{$s$}{}}
\label{sec:sharpness-s}

The case $s = 0$ corresponds to polynomials, in which case \khs{} bound recovers \bzs{} theorem, which is of course sharp. But in general, \khs{} bound grows exponentially in the square of $s$ for fixed $\alpha$ and $\beta_i$. It is believed that a better bound holds, perhaps one where the growth is exponential in $s$.\footnote{Personal Communication with Nicolai Vorobjov.} In fact, in the special case of fewnomials, the exponential dependence on the square of the order of the Pfaffian chain in \khs{} bound can be improved upon, as shown in \cite{Bihan2008}. In general, for $s\geq2$, we found great difficulty in finding functions which attain the bound, as \khs{} bound grows extremely quickly as $s$ increases. Even for low $\alpha$ and $\beta$, a chain length of only two already produces a huge difference between the upper bound and the actual number of zeros: $e^{e^x}-x^2-5x-5$ has format $(2,2,2)$ and suggested bound $64$, where the actual number of zeros is 3. However, it is impossible to prove a bound that grows as $o(2^s)$ when $\alpha, \beta_i, n$ are fixed due to Theorem \ref{thm:alpha} and Theorem \ref{thm:beta-growth}.

\subsubsection{Polynomials in \texorpdfstring{$e^x$}{}}

The following result appears in \cite{barbagallo2023zeros}.

\begin{proposition}[\cite{barbagallo2023zeros}]
\label{prop:barbagallo}
    Let $f(x)=P(x,e^{x})$ where $P(X,Y)\in\R[X,Y]$ be a non-zero polynomial with $\deg_X(P)=n$ and $\deg_Y(P)=m$. Then, $f$ has at most $N:=(n+1)(m+1)-1$ real zeros counting multiplicities.
\end{proposition}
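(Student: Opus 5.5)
The plan is to prove the bound by induction on $m=\deg_Y(P)$, using Rolle's theorem together with a differential operator that kills the top-degree exponential term. Write
\[
f(x)=\sum_{k=0}^{m} p_k(x)e^{kx},
\]
with $\deg p_k\le n$, and let $N(f)$ denote the number of real zeros of $f$ counted with multiplicity. Define
\[
Z(n_0,\dots,n_m)=\Bigl(\sum_{k=0}^m (n_k+1)\Bigr)-1,
\]
where $n_k=\deg p_k$ (with the convention $n_k=-1$ if $p_k\equiv 0$). The claim to be proved is the sharper statement $N(f)\le Z(n_0,\dots,n_m)$. This specializes to $(n+1)(m+1)-1$ when all $n_k\le n$.

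\textbf{Base case.} If only one $p_k$ is nonzero, then $f=p_ke^{kx}$. It has exactly as many zeros as $p_k$, namely at most $n_k=Z-0$ (when we drop the $-1$ entries from the sum). Zero polynomials contribute $0$ to $Z$, so the base case is consistent.

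\textbf{Inductive step.} The induction is on $D=\sum_k(n_k+1)$, the total number of coefficients. Choose the largest index $k$ with $p_k\not\equiv0$ (or any index with $p_k\not\equiv 0$ when $f$ has at least two nonzero terms). Consider
\[
g=\frac{d}{dx}\bigl(e^{-kx}f\bigr)=\sum_{j}\bigl(p_j'+(j-k)p_j\bigr)e^{(j-k)x}.
\]
By Rolle's theorem with multiplicities, $N(f)=N(e^{-kx}f)\le N(g)+1$. In $g$, the coefficient of $e^{0}$ becomes $p_k'$, so its degree drops by one. Every other coefficient $p_j'+(j-k)p_j$ with $j\neq k$ keeps degree exactly $n_j$, because $j-k\neq0$. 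Thus $g$ is again of the form $\sum q_je^{jx}$ (after multiplying by $e^{kx}$, which preserves zeros), and its parameter $D$ has decreased by one.

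\textbf{Conclusion.} By induction, $N(g)\le D-2$, so $N(f)\le D-1=Z$. We must ensure that $g\not\equiv0$: this holds because at least two terms are nonzero and algebraic independence of $x$ and $e^x$ (Lemma~\ref{lem:algebraic-independence} with $s=1$) prevents cancellation.

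The main obstacle is bookkeeping for the induction. One must ensure the hypothesis is stated with individual degrees $n_k$ rather than a uniform $n$, since the operator lowers only one degree at a time. One must also handle the Rolle step with multiplicities correctly: a zero of order $r$ of $f$ gives a zero of order $r-1$ of $g$, and distinct zeros produce additional zeros in between. This yields $N(g)\ge N(f)-1$.
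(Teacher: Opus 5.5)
The paper does not prove this proposition. It quotes it from \cite{barbagallo2023zeros}, so there is no in-paper argument to compare yours against.

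Your proof is correct. It is the classical P\'olya--Szeg\H{o} argument for exponential polynomials:
\begin{itemize}
\item strong induction on $D=\sum_k(\deg p_k+1)$;
\item multiply by $e^{-kx}$ and differentiate, so that exactly one coefficient loses a degree while the others keep theirs because $j-k\neq 0$;
\item Rolle's theorem with multiplicities gives $N(f)\le N(g)+1$.
\end{itemize}
Compared with the citation, your route gives a self-contained proof of the finer bound $\sum_k(\deg p_k+1)-1$. It also works verbatim for $\sum_k p_k(x)e^{\lambda_k x}$ with any distinct real $\lambda_k$, since the only fact used is $\lambda_j-\lambda_k\neq 0$.

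Three small points are worth tightening.
\begin{itemize}
\item Phrase the Rolle step for an arbitrary finite collection of zeros of $f$: if $f$ has at least $K$ zeros counted with multiplicity, then $g$ has at least $K-1$. This way you do not presuppose $N(f)<\infty$; finiteness then follows from the inductive bound on $N(g)$.
\item Note that all multiplicities are finite because $f$ and $g$ are nonzero real-analytic functions.
\item For $g\not\equiv 0$, give the precise reason. Some $j\neq k$ has $p_j\not\equiv 0$, so $p_j'+(j-k)p_j\not\equiv 0$ by your degree count. Lemma~\ref{lem:algebraic-independence} (with $n=s=1$ and $q_0(x)=x$) then turns this nonzero coefficient tuple into a nonzero function. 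The same lemma is what guarantees $f\not\equiv 0$ from $P\neq 0$ in the first place.
\end{itemize}
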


This tells us that a Pfaffian function defined with respect to the chain $\vec{q} = (e^x)$ has a strict upper bound on the number of its roots determined solely by the degrees of $x$ and $e^x$ appearing in the function, and not by the Pfaffian format itself.

Proposition \ref{prop:barbagallo} proves that the number of real zeros grows as $O(mn)$. However, if $\deg_X(P)=n$ and $\deg_Y(P)=m$, then the degree of $f$ as a Pfaffian function defined with respect to the chain $\vec{q} = (e^x)$ can be at most $m + n$. This means that $f$ has format $(1, m+n, 1)$. \khs{} bound now suggests that the number of zeros of $f$ grows as $O((m+n)^2) = O(m^2 + n^2 + mn)$. Thus \khs{} bound can be sharp only when $m = \Theta(n)$.

\section{Conclusion}
\label{sec:conclusion}

In this paper, we have initiated a systematic study of the sharpness of the \khs{} bound. It would be interesting to construct further families of examples demonstrating sharpness in different parameter regimes, while any improvement to the \khs{} bound itself would constitute a major breakthrough. Pfaffian functions are far more general than polynomials, and consequently Pfaffian sets encompass a much wider class of sets than algebraic sets. Thus, developments in Pfaffian theory are of independent interest, and Pfaffian theory can be applied in settings where Pfaffian sets arise unexpectedly.

One such setting is metric geometry. Suppose one wishes to study the locus of points at a fixed $\ell_p$ distance $\rho \in \R$ from another point $\mathrm{\xi} = (\xi_1, \ldots, \xi_n) \in \R^n$. If $p \in \N$, then such a set is algebraic, defined by $\sum_{i=1}^n (x_i - \xi_i)^p = \rho^p$.\footnote{One must take some care with the $|\cdot|$ appearing in the definition of $\ell_p$ norms, though the problem can essentially be reduced to this form.} However, if $p \in \R \setminus \Z$, then these sets are no longer algebraic, but they are Pfaffian (cf. Example \ref{example:pfaffian}-\ref{point:x-to-m-example}). This illustrates how Pfaffian sets emerge naturally even in familiar geometric contexts. Such considerations are partly motivated by the study of unit and distinct distances in $\ell_2$ norms, as explored in works such as \cite{szunit2016}.

These observations suggest that many problems traditionally studied in algebraic or combinatorial geometry may admit Pfaffian analogues. Indeed, the \khs{} bound has already found applications in Pfaffian incidence geometry \cite{balsera2023incidencespfaffiancurvesfunctions, lotz2024partitioning, natarajan2025distinct}, where one studies incidences between Pfaffian sets rather than real algebraic sets. However, the extension of incidence geometry to the Pfaffian setting remains at a relatively early stage: papers on the topic are still few, though the generalisation appears promising.

Beyond geometry, Pfaffian methods have also begun to appear in arithmetic settings. For example, Jones and Schmidt \cite{jones2021pfaffian} establish an intriguing connection between Pfaffian functions and elliptic curves. Two Weierstrass functions are expressed in terms of Pfaffians, enabling the authors to obtain bounds on their number of zeros via the \khs{} theorem, from which bounds on connected components readily follow. The paper also discusses the extent to which these bounds are optimal, particularly because of their dependence on the degree of the Pfaffian functions. This dependence could potentially be improved significantly if a sharper alternative to the \khs{} bound were discovered.

\bibliographystyle{alphaurl}
\bibliography{main}

\end{document}